\newtheorem{theorem}{Theorem}[section]
\newtheorem{lemma}[theorem]{Lemma}
\newtheorem{proposition}[theorem]{Proposition}
\newtheorem{claim}[theorem]{Claim}
\theoremstyle{definition}
\newtheorem{definition}[theorem]{Definition}
\newtheorem{question}[theorem]{Question}
\newcounter{cases}[theorem]
\newtheorem{case}[cases]{Case}
\theoremstyle{remark}
\DeclareMathOperator{\tp}{tp}
\newcommand{\mc}[1]{\mathcal{#1}}
\newcommand{\concat}[0]{\textrm{\^{}}}
\newcommand{\la}[0]{\langle}
\newcommand{\ra}[0]{\rangle}
\begin{document}

\title[Scott sentences of f.g.~structures]{On optimal Scott sentences of finitely generated algebraic structures}

\author{Matthew Harrison-Trainor}
\address{Group in Logic and the Methodology of Science\\
University of California, Berkeley\\
 USA}
\email{matthew.h-t@berkeley.edu}
\urladdr{\href{http://www.math.berkeley.edu/~mattht/index.html}{www.math.berkeley.edu/$\sim$mattht}}

\author{Meng-Che Ho}
\address{Department of Mathematics\\
University of Wisconsin--Madison\\
USA}
\email{ho@math.wisc.edu}
\urladdr{\href{http://www.math.wisc.edu/~ho/}{www.math.wisc.edu/$\sim$ho/}}

\begin{abstract}
Scott showed that for every countable structure $\mathcal{A}$, there is a sentence of the infinitary logic $\mc{L}_{\omega_1\omega}$, called a Scott sentence for $\mc{A}$, whose models are exactly the isomorphic copies of $\mathcal{A}$. Thus, the least quantifier complexity of a Scott sentence of a structure is an invariant that measures the complexity ``describing" the structure. Knight et al.~have studied the Scott sentences of many structures. In particular, Knight and Saraph showed that a finitely generated structure always has a $\Sigma^0_3$ Scott sentence. We give a characterization of the finitely generated structures for whom the $\Sigma^0_3$ Scott sentence is optimal. One application of this result is to give a construction of a finitely generated group where the $\Sigma^0_3$ Scott sentence is optimal.
\end{abstract}

\maketitle

\section{Introduction}

Given a countable structure $\mc{M}$, we can describe $\mc{M}$, up to isomorphism, by a sentence of the infinitary logic $\mc{L}_{\omega_1 \omega}$ which allows countable conjunctions and disjunctions. (See Section \ref{scott sentences} for the formal description of this logic; in this brief introduction, we will write down sentences in an informal way.) To measure the complexity of $\mc{M}$, we want to write down the simplest possible description of $\mc{M}$. For example, one can describe the countably infinite-dimensional $\mathbb{Q}$-vector space by the vector space axioms together with the sentence
\smallskip{}
\begin{adjustwidth}{1em}{0pt}
\textit{for all $n$, there are $x_1,\ldots,x_n$ such that for all $r_1,\ldots,r_n \in \mathbb{Q}$, if $r_1 x_1 + \cdots + r_n x_n = 0$ then some $r_i = 0$.}
\end{adjustwidth}
\smallskip{}
This sentence has a universal quantifier, followed by an existential quantifier, followed by a universal quantifier. There is a hierarchy of sentences depending on the number of quantifier alternations. The $\Sigma^0_n$ sentences have $n$ alternations of quantifiers, beginning with existential quantifiers; the $\Pi^0_n$ sentences have $n$ alternations of quantifiers, beginning with a universal quantifier; and the d-$\Sigma^0_n$ sentences are the conjunction of a $\Sigma^0_n$ and a $\Pi^0_n$ sentence. The hierarchy is ordered as follows, from the simplest formulas on the left, to the most complicated formulas on the right:
\[
\xymatrix@=7pt{
                                                      & \Sigma^0_1 \ar[dr]&                                      &                                                      &\Sigma_2^0\ar[dr]&&& \Sigma^0_3 \ar[dr] & & \\
\Sigma^0_1\cap\Pi^0_1 \ar[ur]\ar[dr] &                            &\text{d-}\Sigma^0_1 \ar[r]&\Sigma^0_2\cap\Pi^0_2 \ar[ur]\ar[dr]&&\text{d-}\Sigma^0_2 \ar[r]&\Sigma^0_{3}\cap\Pi^0_{3} \ar[ur] \ar[dr] & & \text{d-}\Sigma^0_3 \ar[r] & \cdots\\
                                                      & \Pi^0_1 \ar[ur]      &                                      &                                                     &\Pi_2^0\ar[ur]&&& \Pi^0_3    \ar[ur] & &
}
\]
\noindent We use this hierarchy to measure the complexity of a sentence. The sentence given above describing the infinite-dimensional $\mathbb{Q}$-vector space is a $\Pi^0_3$ sentence, and it turns out that this is the best possible; there is no d-$\Sigma^0_2$ description of this vector space. There is a d-$\Sigma^0_2$ description of any finite-dimensional $\mathbb{Q}$-vector space, and so these structures are ``simpler'' than the infinite-dimensional vectors space.

In this paper, we consider descriptions of finitely generated structures, and particularly of finitely generated groups. Any finitely generated structure $\mc{M}$, with generating tuple $\bar{a}$, has a $\Sigma^0_3$ description of the form:
\smallskip{}
\begin{adjustwidth}{1em}{0pt}
\textit{there is a tuple $\bar{x}$, satisfying the same atomic formulas as $\bar{a}$ (i.e., for all atomic formulas true of $\bar{a}$, the formula is true of $\bar{x}$), such that every element is generated by $\bar{x}$ (i.e., for all $y$, there is a term $t$ in the language such that $y = t(\bar{x})$).}
\end{adjustwidth}
\smallskip{}
However, many finitely generated groups have a simpler description which is d-$\Sigma^0_2$. For the group $\mathbb{Z}$, for example, the $\Pi^0_2$ axioms of torsion-free abelian groups, together with the following two sentences, which are $\Pi^0_2$ and $\Sigma^0_2$ respectively, form a d-$\Sigma^0_2$ description:
\smallskip{}
\begin{adjustwidth}{1em}{0pt}
\textit{for all $x$ and $y$, there are $n,m \in \mathbb{Z}$, not both zero, such that $n x = m y$}
\end{adjustwidth}
\smallskip{}
and
\smallskip{}
\begin{adjustwidth}{1em}{0pt}
\textit{there is $x \neq 0$ which has no proper divisors.}
\end{adjustwidth}
\smallskip{}
Indeed, all previously known examples of finitely generated groups had a d-$\Sigma^0_2$ Scott sentences, including all polycyclic (including nilpotent) groups and many finitely-generated solvable groups \cite{Ho}. The main result of this paper is an example of a computable group which has no d-$\Sigma^0_2$ Scott sentence. Our group has $\Sigma^0_3$ $m$-complete index set.

This paper is divided into two main sets of results. The first is a general investigation of conditions for a finitely-generated structure to have (or not have) a d-$\Sigma^0_2$ Scott sentence. The second is an application of these general results to constructing the group mentioned above. We also include some results on finitely generated fields and rings.

\subsection{Scott sentences} \label{scott sentences}

The infinitary logic $\mc{L}_{\omega_1 \omega}$ is the logic which allows countably infinite conjunctions and disjunctions but only finite quantification. If the conjunctions and disjunctions of a formula $\varphi$ are all over computable sets of indices for formulas, then we say that $\varphi$ is computable. 

We use the following recursive definition to define the complexity of classes:
\begin{itemize}
\item An $\mc{L}_{\omega_1 \omega}$ formula is both $\Sigma_0^0$ and $\Pi_0^0$ if it is quantifier free and does not contain any infinite disjunction or conjunction. 
\item An $\mc{L}_{\omega_1 \omega}$ formula is $\Sigma_\alpha^0$ if it is a countable disjunction of formulas of the form $\exists x \phi$ where each $\phi$ is $\Pi_\beta^0$ for some $\beta < \alpha$.
\item An $\mc{L}_{\omega_1 \omega}$ formula is $\Pi_\alpha^0$ if it is a countable disjunction of formulas of the form $\forall x \phi$ where each $\phi$ is $\Sigma_\beta^0$ for some $\beta < \alpha$.
\end{itemize}
We say a formula is $\text{d-}\Sigma_\alpha^0$ if it is a conjunction of a $\Sigma_\alpha^0$ formula and a $\Pi_\alpha^0$ formula. 

Scott \cite{Scott65} showed that if $\mc{A}$ is a countable structure in a countable language, then there is a sentence $\varphi$ of $\mc{L}_{\omega_1 \omega}$ whose countable models are exactly the isomorphic copies of $\mc{A}$. Such a sentence is called a \textit{Scott sentence} for $\mc{A}$. We remark that because $\alpha \wedge \neg (\beta \wedge \neg \gamma)$ is equivalent to $(\alpha \wedge \neg \beta) \vee (\alpha \wedge \gamma)$, the complexity classes $\text{$n$-}\Sigma^0_\alpha$ of Scott sentences collapse for $n \geq 2$.

We can measure the complexity of a countable structure by looking for a Scott sentence of minimal complexity, as measured by the quantifier complexity hierarchy of computable formulas described above. \cite{D.E.Miller78} showed that if $\mc{A}$ has a $\Pi^0_{\alpha}$ Scott sentence and a $\Sigma^0_{\alpha}$ Scott sentence, then it must have a $\text{d-}\Sigma^0_\beta$ Scott sentence for some $\beta < \alpha$. So for a given structure, the optimal Scott sentence is $\Sigma^0_\alpha$, $\Pi^0_\alpha$, or d-$\Sigma^0_\alpha$ for some $\alpha$.

We refer the interested readers to Chapter 6 of \cite{AshKnight00} for a more complete description of $\mc{L}_{\omega_1 \omega}$ formulas and Scott sentences. 

\subsection{Index set complexity}

Given a structure $\mc{A}$ and a Scott sentence $\varphi$ for $\mc{A}$, we want to determine whether $\varphi$ is an optimal Scott sentence for $\mc{A}$, or whether there is a simpler Scott sentence which we have not yet found. We can use index set calculations to resolve this problem.

\begin{definition}
Let $\mc{A}$ be a structure. The index set $I(\mc{A})$ is the set of all indices $e$ such that the $e$th Turing machine $\Phi_e$ gives the atomic diagram of a structure $\mc{B}$ isomorphic to $\mc{A}$. We can also relativize this to any set $X$: $I^X(\mc{A})$ is the set of all indices $e$ such that the $e$th Turing machine $\Phi^X_e$ with oracle $X$ gives the atomic diagram of a structure $\mc{B}$ isomorphic to $\mc{A}$.
\end{definition}

\noindent There is a connection between index sets and Scott sentences:

\begin{proposition}
If a countable structure $\mc{A}$ has an $X$-computable $\Sigma^0_\alpha$ (respectively $\Pi^0_\alpha$ or \textup{d-}$\Sigma^0_\alpha$) Scott sentence, then the index set $I^X(\mc{A})$ is in $\Sigma^0_\alpha(X)$ (respectively $\Pi^0_\alpha(X)$ or \textup{d-}$\Sigma^0_\alpha(X)$).
\end{proposition}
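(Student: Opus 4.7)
The plan is to prove, by induction on $\alpha$, the following uniform evaluation lemma: for every computable $\Sigma^0_\alpha$ formula $\varphi(\bar{x})$ in the language, the set
\[
\{(e,\bar{a}) : \Phi_e^X \text{ codes a structure } \mc{B}_e \text{ and } \mc{B}_e \models \varphi(\bar{a})\}
\]
is $\Sigma^0_\alpha(X)$, uniformly in an index for $\varphi$, and symmetrically for $\Pi^0_\alpha$. Once this is established, applying it to the Scott sentence $\varphi$ of $\mc{A}$ gives the conclusion for the $\Sigma^0_\alpha$ and $\Pi^0_\alpha$ cases; the d-$\Sigma^0_\alpha$ case follows because the intersection of a $\Sigma^0_\alpha(X)$ and a $\Pi^0_\alpha(X)$ set is d-$\Sigma^0_\alpha(X)$.

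For the base case, if $\varphi(\bar{x})$ is quantifier-free, then evaluating $\varphi(\bar{a})$ in $\mc{B}_e$ is a Boolean combination of finitely many queries to $\Phi_e^X$ about atomic facts on the tuple $\bar{a}$; the resulting set is $X$-computable (or more precisely $\Delta^0_1(X)$ relative to convergence of finitely many computations), and in particular lies in both $\Sigma^0_1(X)$ and $\Pi^0_1(X)$. For the inductive step, suppose $\varphi(\bar{x})$ is $\Sigma^0_\alpha$, so by definition $\varphi(\bar{x}) \equiv \bigvee_{i \in I} \exists \bar{y}_i\, \psi_i(\bar{x},\bar{y}_i)$ where $I$ is a computable index set and each $\psi_i$ is $\Pi^0_{\beta_i}$ for some $\beta_i < \alpha$. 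Then $\mc{B}_e \models \varphi(\bar{a})$ iff there exist $i \in I$ and a tuple $\bar{b}$ from $\omega$ such that $\mc{B}_e \models \psi_i(\bar{a},\bar{b})$, and by the inductive hypothesis applied to $\psi_i$ (which is $\Pi^0_{\beta_i}$), the inner condition is $\Pi^0_{\beta_i}(X)$ uniformly in $i$. Collapsing the existential over $i$ and $\bar{b}$ into a single existential quantifier over natural numbers, we get a $\Sigma^0_\alpha(X)$ definition. The $\Pi^0_\alpha$ case is dual.

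Finally, $I^X(\mc{A})$ is the intersection of the set of $e$ with $\mc{B}_e \models \varphi$ together with the condition that $\Phi_e^X$ actually codes (the atomic diagram of) a structure. The latter condition is $\Pi^0_2(X)$ (totality plus consistency of the atomic diagram), so for $\alpha \geq 2$ it is absorbed into the $\Sigma^0_\alpha(X)$, $\Pi^0_\alpha(X)$, or d-$\Sigma^0_\alpha(X)$ bound without increasing complexity; for $\alpha = 1$ one adopts the standard convention that all $\Phi_e^X$ under consideration are understood to compute total $\{0,1\}$-valued functions on atomic sentences, so this side condition is subsumed in the background indexing.

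The main obstacle is purely bookkeeping: verifying that the inductive evaluation can be done uniformly in indices for the subformulas (so that countable disjunctions over computable index sets translate cleanly into a single unbounded existential), and handling the $\Pi^0_2(X)$ well-formedness condition on $\Phi_e^X$ so that it does not inflate the complexity in the small-$\alpha$ cases. No genuinely new idea is needed beyond the standard arithmetization of satisfaction for $\mc{L}_{\omega_1\omega}$ formulas.
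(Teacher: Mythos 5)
The paper does not actually prove this proposition; it is stated without proof as a standard fact (the usual reference is Ash--Knight \cite{AshKnight00}), so there is no ``paper's proof'' to compare against. Your argument is the standard one: arithmetize satisfaction of computable $\Sigma^0_\alpha$ formulas by transfinite induction on the ordinal rank, showing uniformly that evaluating a $\Sigma^0_\alpha$ formula in the structure coded by $\Phi_e^X$ is $\Sigma^0_\alpha(X)$, and then read off the complexity of $I^X(\mc{A})$ from the Scott sentence. The inductive core of that argument is correct (including at limit levels, where the uniformity in indices for the disjuncts is exactly what makes the collapsed existential stay at level $\alpha$), and the reduction of the d-$\Sigma^0_\alpha$ case to an intersection is fine.

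The one genuine gap is in your treatment of the well-formedness side condition. You claim that the $\Pi^0_2(X)$ condition ``\,$\Phi_e^X$ actually codes the atomic diagram of a structure'' is absorbed for all $\alpha \geq 2$, but that is not true at $\alpha = 2$ in the $\Sigma$ case: the intersection of a $\Pi^0_2(X)$ set with a $\Sigma^0_2(X)$ set is in general only d-$\Sigma^0_2(X)$, not $\Sigma^0_2(X)$. One can even see that the mismatch is real: under the lenient evaluation you describe (treating divergent atomic queries as not falsifying a universal subformula), a non-total $\Phi_e^X$ can satisfy a $\Sigma^0_2$ Scott sentence of a small structure while failing to code a structure at all, so the totality condition cannot simply be dropped. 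This case genuinely needs a separate convention or argument (for instance, restricting attention to an effective listing where every index codes a structure), and it should not be lumped in with $\alpha\geq 3$ where the $\Pi^0_2$ condition really is dominated. That said, this does not affect the applications made in the paper, which only invoke the proposition at level $\Sigma^0_3$ and d-$\Sigma^0_2$, both of which do absorb the $\Pi^0_2$ condition as you say.
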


\noindent So if, for example, we have a computable $\Sigma^0_3$ Scott sentence for a structure $\mc{A}$, we will try to show that the index set $I(\mc{A})$ is $\Sigma^0_3$ $m$-complete. If we can do this, then we know that our Scott sentence is optimal. In general, any $\mc{L}_{\omega_1 \omega}$ sentence is $X$-computable for some $X$.

\subsection{Summary of prior results}

There are many results using the strategy above to find the complexities of optimal Scott sentences of structures. For example, Knight et al.~\cite{Ca06}, \cite{Ca12} determined the complexities of optimal Scott sentences for finitely generated free abelian groups, reduced abelian groups, free groups, and many other structures. 

However, this strategy does not work when the complexity of the optimal Scott sentence is strictly higher than the complexity of the index set. Indeed, Knight and McCoy gave the first such example in \cite{Kn14}, showing there is a subgroup $G$ of $\mathbb{Q}$ such that $I(G)$ is d-$\Sigma^0_2$, but it has no computable d-$\Sigma^0_2$ Scott sentence.

It was observed in \cite{Kn16} that any computable finitely generated group, and indeed any computable finitely generated structure, has a computable $\Sigma^0_3$ Scott sentence. In \cite{Ho}, it was shown that many classes of ``nice'' groups in the sense of geometric group theory, including polycyclic groups (which includes nilpotent groups and abelian groups), and certain solvable groups all have computable d-$\Sigma^0_2$ Scott sentence. However, none of these examples achieves the $\Sigma^0_3$ bound that was given in \cite{Kn16}.

\subsection{New results}

In this paper, we give an example of a finitely-generated group which has no $\text{d-}\Sigma^0_2$ Scott sentence. As mentioned above, we do this by showing that the index set is $\Sigma^0_3$ $m$-complete.

\begin{theorem}
There is a finitely-generated computable group $G$ whose index set is $\Sigma^0_3$ $m$-complete.
\end{theorem}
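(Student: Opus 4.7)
My plan is to apply the structural characterization developed in the first half of the paper---an equivalence between a finitely generated computable structure failing to have a d-$\Sigma^0_2$ Scott sentence and some combinatorial condition $(\star)$ on the generating tuple---and then to construct a finitely generated computable group that witnesses $(\star)$. Once $G$ satisfies $(\star)$, the index set $I(G)$ is automatically forced outside d-$\Sigma^0_2$, so it remains only to verify $\Sigma^0_3$-hardness by an explicit $m$-reduction from a known $\Sigma^0_3$-complete set such as $\mathrm{Cof} = \{e : W_e \text{ is cofinite}\}$, since the $\Sigma^0_3$ upper bound is already given by the Scott sentence from \cite{Kn16}.

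For the construction, I would build $G$ so as to carry an infinite uniformly-definable family of ``widgets'' $W_0, W_1, W_2, \ldots$, each one a subconfiguration (a finite subgroup, a pair of commuting elements, or a small HNN-style tower) whose realization in a given copy of $G$ corresponds to a single bit of information. Because finite generation rules out making the widgets independent generators, they must be produced as the orbits of a definable action: a natural realization is via an HNN extension over an infinite cyclic subgroup, so that a single stable letter conjugates a base subgroup and enumerates the widgets. The resulting group is finitely generated, but its isomorphism type depends on a $\Sigma^0_3$ amount of data about which widgets are actually realized.

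For the hardness reduction, given $e$ I would effectively produce a computable copy $G_e$ of a candidate group in which widget $W_n$ is ``activated'' precisely when $n$ enters $W_e$. If $W_e$ is cofinite, the finitely many missing widgets are absorbed by the homogeneity of the widget action so that $G_e \cong G$; if $W_e$ is coinfinite, infinitely many widgets are absent, the pattern of $\Sigma^0_2$-types realized over the generating tuple in $G_e$ differs from that in $G$, and consequently $G_e \not\cong G$. Combined with the index set upper bound, this yields $\mathrm{Cof} \leq_m I(G)$ and hence $m$-completeness.

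The main obstacle will be the tension between finite generation and the need for infinitely many independently controllable widgets: once the generating set is fixed, nothing in the group is truly independent, so the widgets must be organized by a single combinatorial mechanism---most likely a group acting on a tree via a Bass--Serre decomposition, with widgets appearing as vertex stabilizers---whose activation or suppression is nonetheless selectable computably. Proving that the candidate groups really are isomorphic to $G$ when the activated widgets form a cofinite set, and non-isomorphic otherwise, is where the bulk of the combinatorial group theory lives, and it is precisely at the non-isomorphism step that the Part 1 characterization must intervene to exclude any clever d-$\Sigma^0_2$ Scott sentence that would short-circuit the $\Sigma^0_3$ lower bound.
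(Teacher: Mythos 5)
Your high-level skeleton (a structural characterization in Part 1, a concrete group in Part 2) matches the paper, but your plan misreads what the characterization actually delivers and therefore proposes a redundant, harder, and more fragile route.

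The paper does \emph{not} first establish ``self-reflective $\Rightarrow$ no d-$\Sigma^0_2$ Scott sentence'' and then separately reduce $\mathrm{Cof}$ to $I(G)$. It proves a single stronger theorem (Theorem~\ref{completeness}): if $\mc{A}$ is computable and self-reflective, then $I(\mc{A})$ is $\Sigma^0_3$ $m$-complete, \emph{full stop}. The reduction there is fully generic and structure-independent: self-reflectivity yields a chain $\mc{A}_0 \prec_1 \mc{A}_1 \prec_1 \cdots \prec_1 \mc{A}^\ast$ of proper $\Sigma^0_1$-elementary copies of $\mc{A}$, and given an arbitrary $\Sigma^0_3$ set $S$ one builds a structure $\mc{B}_n$ by following this chain and collapsing back down whenever the $\Sigma^0_2$ guesses fire; $\mc{B}_n \cong \mc{A}$ iff $n \in S$, and otherwise $\mc{B}_n \cong \mc{A}^\ast$, which is not finitely generated. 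Once you know this, the \emph{only} task in Part 2 is to produce one finitely generated computable group satisfying the single condition ``there is a proper $\Sigma^0_1$-elementary subgroup isomorphic to $G$.'' Your proposal instead asks the group $G$ itself to carry an infinite family of computably-controllable widgets and to be invariant under deletion of any finite subset of them, and then to run a bespoke reduction from $\mathrm{Cof}$; that pushes all the coding burden into the group theory, where it is hardest, and adds a homogeneity requirement (``finitely many missing widgets are absorbed'') that the paper's argument never needs and that you do not justify. The $\prec_1$-chain argument avoids this entirely: it never requires $G$ to be homogeneous or invariant in any way.

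Concretely, two gaps: (i) you never verify that ``activate widget $W_n$ when $n$ enters $W_e$'' is even the right shape for a $\Sigma^0_3$ reduction --- activation is a single $\Sigma^0_1$ event, so the map you describe is sensitive to a $\Sigma^0_2$ amount of information, not $\Sigma^0_3$; the paper's construction genuinely uses the ``infinitely often'' quantifier by collapsing the chain \emph{infinitely many times} each time a $W_{f(e)}$ is reaffirmed. (ii) Your group is underspecified at exactly the point that matters. The paper's group is a $C'(1/10)$ small cancellation group built on an infinite tree of generators, followed by an HNN extension whose stable letter shifts an infinite family $\{b_i\}$; the small cancellation machinery (Greendlinger's Lemma, Dehn-minimality) is what makes the subgroup generated by the shifted tree both proper and $\Sigma^0_1$-elementary, and also what makes $G$ computable. ``A single stable letter conjugates a base subgroup and enumerates the widgets'' and ``Bass--Serre decomposition with widgets as vertex stabilizers'' is not yet a construction, and it is not clear it can be made into one without the small cancellation control, since you need to block both collapse (properness) and new existential witnesses (elementarity) simultaneously.
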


\noindent The proof is in two parts. First, in Section \ref{general theory}, we develop some general results on when a finitely generated structure of any kind has a $\text{d-}\Sigma^0_2$ Scott sentence. These results are of interest independent of their application to groups.

\begin{definition}
Let $\mc{A}$ be a finitely generated structure. Then $\mc{A}$ is \textit{self-reflective} if it contains a proper $\Sigma^0_1$-elementary substructure isomorphic to itself. ($\mc{B}$ is a $\Sigma^0_1$-elementary substructure of $\mc{A}$, and we write $\mc{B} \preceq_1 \mc{A}$, if, for each existential formula $\varphi(\bar{x})$ and $\bar{b} \in \mc{B}$, $\mc{A} \models \varphi(\bar{b})$ if and only if $\mc{B} \models \varphi(\bar{b})$).
\end{definition}

\noindent We prove, using an index-set calculation, the equivalence of (1) and (2) in the following characterization of finitely-generated structures with no d-$\Sigma^0_2$ Scott sentence.
\begin{theorem}\label{main theorem}
Let $\mc{M}$ be a finitely generated structure. The following are equivalent:
\begin{enumerate}
\item $\mc{M}$ has a \textup{d-}$\Sigma^0_2$ Scott sentence,
\item $\mc{M}$ is not self-reflective,
\item for all (or some) generating tuples of $\mc{M}$, the orbit is defined by a $\Pi^0_1$ formula.
\end{enumerate}
\end{theorem}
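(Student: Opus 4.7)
The plan is to prove $(2) \Leftrightarrow (3)$ and $(3) \Rightarrow (1)$ directly, and the substantive direction $(1) \Rightarrow (2)$ by an index-set calculation.

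For $(2) \Leftrightarrow (3)$, fix a generating tuple $\bar a$ of $\mc M$. The central observation is that the orbit of $\bar a$ in $\mc M$ is $\Pi^0_1$-definable if and only if every $\bar b \in \mc M$ with the same $\Pi^0_1$-type as $\bar a$ lies in the orbit; the nontrivial direction takes the countable conjunction of one $\Pi^0_1$ separating formula per non-orbit tuple. Given such a $\bar b$ outside the orbit with the same $\Pi^0_1$-type (equivalently, the same $\Sigma^0_1$-type), the substructure $\langle \bar b \rangle$ is isomorphic to $\mc M$ via $\bar a \mapsto \bar b$ (same quantifier-free type), is proper (else $\bar b$ would generate $\mc M$ and be in the orbit), and satisfies $\langle \bar b \rangle \preceq_1 \mc M$ because for a $\Sigma^0_1$ formula $\varphi(t(\bar b))$ true in $\mc M$, sameness of $\Sigma^0_1$-type between $\bar a$ and $\bar b$ gives $\mc M \models \varphi(t(\bar a))$, which pulls back via the isomorphism to $\langle \bar b \rangle$. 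Conversely, a self-reflection witness furnishes such a $\bar b$ as the image of $\bar a$.

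For $(3) \Rightarrow (1)$, let $\pi(\bar x)$ be a $\Pi^0_1$ formula defining the orbit of $\bar a$, enlarged by conjoining the $\Pi^0_1$ atomic-diagram formula of $\bar a$ so that $\pi$ forces the atomic type. Then
\[
\exists \bar x\, \pi(\bar x) \ \wedge\ \forall \bar x\, \forall y\, \Bigl( \pi(\bar x) \rightarrow \bigvee_{t} y = t(\bar x) \Bigr)
\]
is a d-$\Sigma^0_2$ Scott sentence: the first conjunct is $\Sigma^0_2$, the second is $\Pi^0_2$ since its matrix is $\Sigma^0_1$. In any model, a witness $\bar b$ for $\pi$ has the atomic type of $\bar a$ and generates the whole structure, so $\bar a \mapsto \bar b$ extends to an isomorphism with $\mc M$.

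For $(1) \Rightarrow (2)$, I would argue contrapositively. Suppose $\mc M$ is self-reflective and, for contradiction, has an $X$-computable d-$\Sigma^0_2$ Scott sentence, so $I^X(\mc M) \in \text{d-}\Sigma^0_2(X)$. Iterating self-reflection gives proper $\Sigma^0_1$-elementary substructures $\cdots \preceq_1 \mc M_2 \preceq_1 \mc M_1 \preceq_1 \mc M_0 = \mc M$, all isomorphic to $\mc M$. Given an index $e$ for a $\Sigma^0_3(X)$-complete predicate $R$, I would uniformly build a computable structure $\mc A_e$ via a priority-style construction which, guided by the approximations to $R(e)$, either commits at some stage to copying a fixed $\mc M_k$ (producing $\mc A_e \cong \mc M$) when $R(e)$ holds, or else descends through the chain indefinitely, yielding a proper limit structure not isomorphic to $\mc M$, when $R(e)$ fails. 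This is a many-one reduction from a $\Sigma^0_3(X)$-complete set to $I^X(\mc M)$, contradicting the d-$\Sigma^0_2(X)$ bound. The main obstacle will be the negative case: $\Sigma^0_1$-elementarity of the chain forces existential witnesses to persist under descent, so the construction must arrange that no finite tuple in $\mc A_e$ survives as a potential generating tuple for $\mc M$---every candidate must eventually be driven out of the currently-committed $\mc M_k$, which is feasible precisely because each inclusion $\mc M_{k+1} \subsetneq \mc M_k$ is proper.
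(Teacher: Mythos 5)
Your proposal is correct, and the substantive hard direction $(1)\Rightarrow(2)$ follows exactly the paper's strategy: build a chain $\cdots \preceq_1 \mc{M}_2 \preceq_1 \mc{M}_1 \preceq_1 \mc{M}_0 = \mc{M}$ from iterated self-reflection, and use it to reduce a $\Sigma^0_3(X)$-complete set to $I^X(\mc{M})$ (producing a copy of $\mc{M}$ in the positive case, and the non-finitely-generated direct limit in the negative case), contradicting any d-$\Sigma^0_2(X)$ bound. The paper packages this as Theorem \ref{completeness} and derives $(1)\Rightarrow(2)$ exactly as you sketch.

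Where you differ from the paper is in the other half. The paper proves $(2)\Rightarrow(1)$ directly (Theorem \ref{thm:not-refl}) by building a set $S$ of ``witnesses of non-generation'' and writing a d-$\Sigma^0_2$ sentence from $S$; for the equivalence with $(3)$ it simply cites Alvir--Knight--McCoy. You instead prove $(2)\Leftrightarrow(3)$ directly, via the clean observation that self-reflectivity is exactly the existence of a non-orbit tuple with the same $\Pi^0_1$-type as a generating tuple, and then derive $(3)\Rightarrow(1)$ by writing down the obvious Scott sentence from the $\Pi^0_1$ orbit-definition. This is a genuinely nicer decomposition: it makes $(3)$ the pivot and gives a self-contained proof of all three equivalences, whereas the paper's Theorem \ref{thm:not-refl} secretly constructs that same $\Pi^0_1$ formula (the conjunction of $p$ with $\bigwedge_{\psi\in S}\forall\bar z\,\neg\psi$) without naming it. The two d-$\Sigma^0_2$ sentences produced are essentially the same. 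One small point worth making explicit in your write-up: the equivalence of the ``for all'' and ``for some'' reading of $(3)$, which follows because any two generating tuples are each generated by terms in the other, so a $\Pi^0_1$ definition of one orbit translates into one for the other.
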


\noindent The equivalence of (3) to (1) has been proved by Alvir, Knight, and McCoy \cite{AlvirKnightMcCoy}.

Second, in Section \ref{finitely generated groups}, we apply this characterization to finitely generated groups. Using small cancellation theory and HNN extensions, we produce a computable group $G$ which is self-reflective. Thus---using Theorem \ref{main theorem}---this group has no d-$\Sigma^0_2$ Scott sentence. Using the group ring construction, we generalize this in Section \ref{finitely generated rings} to produce a ring which is self-reflective.

We also apply our results to finitely generated fields in Section \ref{finitely generated fields}. A simple argument shows that no finitely generated field is self-reflective. Thus:

\begin{theorem}\label{thm:field}
Every finitely generated field has a \textup{d-}$\Sigma^0_2$ Scott sentence.
\end{theorem}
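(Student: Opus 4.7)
The plan is to apply the equivalence of conditions (1) and (2) in Theorem \ref{main theorem}: it suffices to show that no finitely generated field is self-reflective. The finite case is immediate by cardinality, since a finite field has no proper subfield of the same size. So I may assume $F$ is infinite, and suppose for contradiction that $E \subsetneq F$ is a proper subfield with $E \cong F$ and $E \preceq_1 F$.

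The next step is a transcendence-degree reduction. Since $E \cong F$, both fields have the same characteristic, hence share a common prime subfield $k$, and both have the same finite transcendence degree $n$ over $k$. Fixing any transcendence basis $\bar{t} \subseteq E$ of $E$ over $k$, the tuple $\bar{t}$ is also algebraically independent as a subset of $F$, and since $F/k$ has transcendence degree $n$, the tuple $\bar{t}$ is a transcendence basis of $F$ over $k$ as well. Therefore $F$ is algebraic over $E$; combined with $F$ being finitely generated over $k \subseteq E$, the extension $F/E$ is finite.

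The final step exploits $\Sigma^0_1$-elementarity. Pick $\alpha \in F \setminus E$ and let $p(x) \in E[x]$ be its minimal polynomial over $E$; since $\alpha \notin E$, the polynomial $p$ is irreducible over $E$ of degree at least $2$, so $p$ has no root in $E$. On the other hand, the existential formula $\exists y\, p(y) = 0$, viewed with the coefficients of $p$ as parameters in $E$, is satisfied in $F$ by $\alpha$. By $E \preceq_1 F$ the same formula is satisfied in $E$, yielding a root of $p$ in $E$ — a contradiction.

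The whole argument is short once Theorem \ref{main theorem} is in hand, so I do not anticipate a significant obstacle. The one subtle point worth flagging is that $E \cong F$ is invoked only to match the transcendence degrees, rather than through the abstract isomorphism itself (which need not respect the inclusion $E \hookrightarrow F$); this is what guarantees that $F/E$ really is a nontrivial algebraic extension, which is the essential ingredient for the minimal-polynomial argument to bite.
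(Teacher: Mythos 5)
Your argument is correct and follows essentially the same strategy as the paper: reduce to showing no finitely generated field is self-reflective, use transcendence degree to conclude that any proper subfield $E\cong F$ makes $F/E$ a finite proper extension, and then exploit $E\preceq_1 F$ to derive a contradiction. The only difference is cosmetic: where you use the minimal polynomial of a single element of $F\setminus E$ to produce the offending existential formula, the paper uses a quantifier-free formula isolating the atomic type of a generating tuple of $F$ over $E$ -- your single-variable version is arguably a bit cleaner, but the underlying idea is the same.
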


\subsection{Open questions}

We leave here several open questions. First, a special class of finitely generated groups are the finitely presented groups. Is there a (computable) finitely presented group with no d-$\Sigma^0_2$ Scott sentence?

\begin{question}
Does every finitely presented group with solvable word problem have a d-$\Sigma^0_2$ Scott sentence?
\end{question}

Second, one can consider structures other than fields and groups. A natural class to consider is rings. Using the group ring construction, we get a self-reflective ring.
However, if we insist that the ring be commutative, then such a construction no longer works.

\begin{question}
Does every commutative ring have a d-$\Sigma^0_2$ Scott sentence?
\end{question}

\noindent One can also place further restrictions on the ring. A natural restriction is that there be no zero-divisors.

\begin{question}
Does every integral domain have a d-$\Sigma^0_2$ Scott sentence?
\end{question}

\noindent We expect the answer to be yes, as integral domains have a good dimension theory.

\section{General theory}\label{general theory}

Our goal in this section is to prove Theorem \ref{main theorem}. The proof is in two parts. First we will show that if $\mc{A}$ is not self-reflective, then it has a d-$\Sigma^0_2$ Scott sentence. Second, we will show that if $\mc{A}$ is self-reflective, then its index set is as complicated as possible.

\begin{theorem}\label{thm:not-refl}
Let $\mc{A}$ be a finitely generated structure. If $\mc{A}$ is not self-reflective, then $\mc{A}$ has a \textup{d-}$\Sigma^0_2$ Scott sentence.
\end{theorem}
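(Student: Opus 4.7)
The plan is to prove implication (2) $\Rightarrow$ (3) of Theorem \ref{main theorem} and then invoke the Alvir--Knight--McCoy result (3) $\Rightarrow$ (1); for completeness I would also indicate the short construction that produces the d-$\Sigma^0_2$ Scott sentence directly from (3).

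Fix a generating tuple $\bar{a}$ of $\mc{A}$ and let $\Psi(\bar{x})$ be the conjunction of all $\Pi^0_1$ formulas satisfied by $\bar{a}$ in $\mc{A}$; since a countable conjunction of $\Pi^0_1$ formulas is itself $\Pi^0_1$, this is a single $\Pi^0_1$ formula. The orbit of $\bar{a}$ is automatically contained in the definable set $\{\bar{b} : \mc{A} \models \Psi(\bar{b})\}$, because automorphisms preserve $\Pi^0_1$ formulas. For the reverse inclusion, suppose $\mc{A} \models \Psi(\bar{b})$. Because the atomic type of $\bar{a}$ is part of $\Psi$, the assignment $t(\bar{a}) \mapsto t(\bar{b})$ (for terms $t$) is a well-defined isomorphism $f$ of $\mc{A}$ onto the substructure $\mc{B}$ of $\mc{A}$ generated by $\bar{b}$. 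The crucial step is to upgrade this to $\mc{B} \preceq_1 \mc{A}$: for an existential formula $\varphi$ and $\bar{c} = \bar{t}(\bar{b}) \in \mc{B}$, the upward direction $\mc{B} \models \varphi(\bar{c}) \Rightarrow \mc{A} \models \varphi(\bar{c})$ holds automatically, while the converse uses the containment $\Sigma^0_1^{\mc{A}}(\bar{b}) \subseteq \Sigma^0_1^{\mc{A}}(\bar{a})$ encoded by $\Psi(\bar{b})$: if $\mc{A} \models \varphi(\bar{t}(\bar{b}))$ then the formula $\varphi(\bar{t}(\bar{x}))$ lies in $\Sigma^0_1^{\mc{A}}(\bar{a})$, so $\mc{A} \models \varphi(\bar{t}(\bar{a}))$, and pushing forward via $f$ yields $\mc{B} \models \varphi(\bar{t}(\bar{b}))$. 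Since $\mc{A}$ is not self-reflective, the $\Sigma^0_1$-elementary substructure $\mc{B} \cong \mc{A}$ must equal $\mc{A}$, so $f$ is an automorphism carrying $\bar{a}$ to $\bar{b}$; this establishes (3). The d-$\Sigma^0_2$ Scott sentence is then $\exists \bar{x}\,\Psi(\bar{x}) \wedge \forall y\, \forall \bar{x}\,(\Psi(\bar{x}) \to \bigvee_{t} y = t(\bar{x}))$; in any model, a tuple $\bar{c}$ satisfying $\Psi$ provides an embedding of $\mc{A}$ via $t(\bar{a}) \mapsto t(\bar{c})$, and the second conjunct forces that embedding to be surjective.

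The main obstacle is recognizing that the natural embedding $f$ produced by a $\Psi$-witness is not merely an embedding but a $\Sigma^0_1$-elementary one---without that refinement, the non-self-reflective hypothesis has nothing to bite on, since every finitely generated structure contains lots of isomorphic copies of itself as (non-elementary) substructures. Choosing $\Psi$ to encode the full $\Pi^0_1$ type of $\bar{a}$, rather than only its atomic type, is precisely what supplies the $\Sigma^0_1$-elementarity needed to invoke self-reflectivity.
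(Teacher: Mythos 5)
Your argument is correct and, up to presentation, is the same as the paper's. Where the paper carves out a hand-picked $\Pi^0_1$ formula --- the atomic type $p(\bar{x})$ conjoined with $\bigwedge_{\psi\in S}\forall\bar{z}\,\neg\psi(\bar{x},\bar{z})$, where $S$ is the set of quantifier-free obstructions witnessing non-$\Sigma^0_1$-elementarity of non-generating $p$-tuples --- you use the full $\Pi^0_1$ type $\Psi$ of the generating tuple. In $\mc{A}$ these two $\Pi^0_1$ formulas define the same set (the orbit of $\bar{a}$), and the resulting d-$\Sigma^0_2$ Scott sentences are the same: ``some tuple has the $\Pi^0_1$ type'' ($\Sigma^0_2$) conjoined with ``every such tuple generates'' ($\Pi^0_2$). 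The one extra observation you make explicit, which the paper leaves implicit, is that a tuple realizing the $\Pi^0_1$ type of $\bar{a}$ generates a $\Sigma^0_1$-elementary substructure isomorphic to $\mc{A}$; this is exactly the hinge on which non-self-reflectivity turns in both proofs, and it is also what gives implication (2) $\Rightarrow$ (3) of Theorem \ref{main theorem}. Your detour through Alvir--Knight--McCoy for (3) $\Rightarrow$ (1) is logically legitimate but unnecessary once you have written down the direct Scott sentence, as you do at the end.
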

\begin{proof}
Let $\bar{g}$ be a generating tuple for $\mc{A}$. Let $p$ be the atomic type of $\bar{g}$. For any tuple $\bar{g}'$ satisfying $p$, the substructure generated by $\bar{g}$ is isomorphic to $\mc{A}$. Since $\mc{A}$ is not self-reflective, if $\bar{g}'$ does not generate $\mc{A}$, then there is a tuple $\bar{a}$ and a quantifier-free formula $\psi(\bar{x},\bar{y})$ with $\mc{A} \models \psi(\bar{g}',\bar{a})$, such that there is no $\bar{b} \in \mc{A}$ such that $\mc{A} \models \psi(\bar{g},\bar{b})$. Let $S$ be the set of formulas $\psi(\bar{x},\bar{y})$ such that for some tuple $\bar{g}'$ satisfying the atomic type $p$ but not generating $\mc{A}$, and some $\bar{a}$, $\mc{A} \models \psi(\bar{g}',\bar{a})$, but there is no $\bar{b} \in \mc{A}$ such that $\mc{A} \models \psi(\bar{g},\bar{b})$.

Using the set $S$, we can now define the Scott sentence for $\mc{A}$. The Scott sentence for $\mc{A}$ is the conjunction of the $\Sigma^0_2$ sentence which says:
\smallskip{}
\begin{adjustwidth}{1em}{0pt}
\textit{there exists a tuple $\bar{x}$ satisfying $p$ and such that for all $\bar{z}$ and $\psi \in S$, $\bar{x}\bar{z}$ does not satisfy $\psi$,}
\end{adjustwidth}
\smallskip{}
and the $\Pi^0_2$ sentence which says: 
\smallskip{}
\begin{adjustwidth}{1em}{0pt}
\textit{for all tuples $\bar{x}$ which satisfy $p$, either for all $y$, $y \in \la \bar{x} \ra$, or there is a formula $\psi \in S$ and a tuple $\bar{z}$ such that $\bar{x},\bar{z}$ satisfies $\psi$.}
\end{adjustwidth}
\smallskip{}
This latter sentence is of the form $(\forall \bar{x})\left[\theta \rightarrow (\alpha \vee \beta)\right]$ where $\theta$ is $\Pi^0_1$, $\alpha$ is $\Pi^0_2$, and $\beta$ is $\Sigma^0_1$.

It is easy to see that $\mc{A}$ models this sentence. Now suppose that $\mc{M}$ is any structure which satisfies this sentence. Since $\mc{M}$ satisfies the $\Sigma^0_2$ part of the sentence, there is a tuple $\bar{h} \in \mc{M}$ which satisfies the atomic type $p$, and such that for all $\bar{c} \in \mc{M}$ and $\psi \in S$, $\mc{M} \nvDash \psi(\bar{h},\bar{c})$. We claim that $\bar{h}$ generates $\mc{M}$; since $\bar{h}$ satisfies the atomic type $p$, this would imply that $\mc{M}$ is isomorphic to $\mc{A}$. Indeed, by the $\Pi^0_2$ part of the sentence, either $\bar{h}$ generates $\mc{M}$ or there is a formula $\psi \in S$ and a tuple $\bar{c}$ such that $\mc{M} \models \psi(\bar{h},\bar{c})$. The latter cannot happen, and so $\bar{h}$ generates $\mc{M}$. 
\end{proof}

We will now show that if $\mc{A}$ is self-reflective, then (relativizing everything to $\mc{A}$) its index set is $\Sigma^0_3$ $m$-complete. We will use the following remark in the proof.

\begin{theorem}\label{completeness}
Let $\mc{A}$ be $X$-computable and self-reflective. Then $I^{X}(\mc{A})$ is $\Sigma^0_3(X)$ $m$-complete (relative to $X$).
\end{theorem}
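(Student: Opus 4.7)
The plan is to give an $X$-computable $m$-reduction from an arbitrary $\Sigma^0_3(X)$ set to $I^X(\mc{A})$; the upper bound $I^X(\mc{A})\in\Sigma^0_3(X)$ is immediate from the computable $\Sigma^0_3$ Scott sentence that every $X$-computable finitely generated structure enjoys. By relativization I will assume $X=\emptyset$ throughout.

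The key structural ingredient is a ladder of copies of $\mc{A}$ obtained by iterating the self-reflective embedding. First I would fix a generating tuple $\bar{g}$ for $\mc{A}$ and a witness $\phi:\mc{A}\hookrightarrow\mc{A}$ to self-reflectiveness, with $\phi(\mc{A})\preceq_1\mc{A}$ a proper substructure, and then form the direct limit
\[
  \mc{A}_\infty \;=\; \varinjlim\bigl(\mc{A}\xrightarrow{\phi}\mc{A}\xrightarrow{\phi}\mc{A}\xrightarrow{\phi}\cdots\bigr).
\]
This is the ascending union of a strictly increasing chain $\mc{A}_0\preceq_1\mc{A}_1\preceq_1\mc{A}_2\preceq_1\cdots$ of copies of $\mc{A}$, each $\Sigma^0_1$-elementarily embedded in $\mc{A}_\infty$. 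Because the chain is strict, $\mc{A}_\infty$ is not finitely generated, and so $\mc{A}_\infty\not\cong\mc{A}$, even though $\mc{A}\preceq_1\mc{A}_\infty$.

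Next I would carry out the reduction. Given a $\Sigma^0_3$ set $S$, written as $e\in S$ iff $\exists n\, P_n(e)$ with $P_n$ uniformly $\Pi^0_2$, standard $\Sigma^0_3$-approximation techniques produce a computable non-decreasing sequence $N_s=N_s(e)$ whose limit (in $\mathbb{N}\cup\{\infty\}$) is finite iff $e\in S$. Then, uniformly in $e$, I would build the computable structure $\mc{M}_e$ stage by stage: at stage $s$, $\mc{M}_e$ is a computable copy of $\mc{A}_{N_s}$, and whenever $N_s$ increases I enumerate into $\mc{M}_e$ the additional elements and atomic relations needed to extend the existing copy of $\mc{A}_{N_{s-1}}$ to a copy of $\mc{A}_{N_s}$, using the chain structure in $\mc{A}_\infty$. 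If $e\in S$ then $N_s$ stabilizes at some finite $N^*$ and $\mc{M}_e$ is a copy of $\mc{A}_{N^*}\cong\mc{A}$; if $e\notin S$ then $N_s\to\infty$ and $\mc{M}_e$ is a copy of $\mc{A}_\infty\not\cong\mc{A}$.

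The main obstacle will be producing the non-decreasing approximation $N_s$, since structural additions to $\mc{M}_e$ cannot be undone. I expect to handle this by maintaining, for each candidate $n$, the standard approximation to the $\Pi^0_2$ condition $P_n(e)$ (keeping track of apparent violators as more witnesses appear), advancing the guess to the next candidate when the current one is falsified, and forcing monotonicity by running maxima. Crucially, $N_s$ need not converge to the least true witness: any finite stabilization value $N^*$ yields $\mc{M}_e\cong\mc{A}_{N^*}\cong\mc{A}$, so initial overshoots caused by spurious apparent violators are harmless.
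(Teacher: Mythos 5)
There is a genuine gap in your reduction: the monotone approximation you rely on cannot exist for a $\Sigma^0_3$-complete set. If $N_s$ were a computable, non-decreasing sequence whose limit is finite exactly when $e \in S$, then $e \in S \iff (\exists N)(\forall s)\, N_s \leq N$, which is a $\Sigma^0_2$ predicate; so any $S$ admitting such an approximation is already $\Sigma^0_2$. Your ``running maxima'' remedy does not escape this. In the standard $\Sigma^0_3$ approximation to $\exists n\,P_n(e)$ with $P_n$ $\Pi^0_2$, a true witness $n$ is one whose apparent violator advances infinitely often, but at any finite stage it can look frozen for arbitrarily long. A purely forward-moving guess will therefore overshoot every true witness and diverge even when $e \in S$. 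The genuine approximation to the least witness is $\liminf$-style: it returns to the correct value infinitely often but with unbounded excursions in between, so its running maximum tends to infinity.

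The paper handles exactly this obstacle, and it is here that self-reflectiveness does real work. The domain of the structure $\mc{B}$ is built monotonically (you commit to finite pieces of the atomic diagram and never retract), but the internal bookkeeping is allowed to ``collapse.'' The paper maintains nested unary ``levels'' $R_0 \subseteq R_1 \subseteq \cdots$ together with a partial embedding $j[s]$ sending $R_n$ into $\mc{A}_n$ along a chain $\mc{A}_0 \prec_1 \mc{A}_1 \prec_1 \cdots \prec_1 \mc{A}^*$. When the $\Sigma^0_3$ approximation requires retreating from level $k_s$ back down to level $e$, the finite piece built so far satisfies a quantifier-free formula $\psi(\bar u, \bar v)$ with $\bar u$ already mapped into $\mc{A}_e$; since $\mc{A}_e \prec_1 \mc{A}_{k_s}$ and $(\exists \bar y)\,\psi(j(\bar u),\bar y)$ is $\Sigma_1$, a witness $\bar a$ exists in $\mc{A}_e$, so $\bar v$ can be re-mapped into $\mc{A}_e$ without retracting any atomic commitment. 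This collapsing move is precisely what lets the construction ``undo'' overshoots; without it, and in particular without invoking $\Sigma^0_1$-elementarity at this point, your plan of only ever extending $\mc{M}_e$ along a monotone $N_s$ cannot reach a $\Sigma^0_3$-complete index set. You would need to incorporate such a retreat mechanism into your construction.
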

\begin{proof}
We will assume that $\mc{A}$ is computable; the general result can be obtained by relativizing. Fix a $\Sigma^0_3$ set $S$. We may assume that $S$ is of the form
\[ n \in S \Longleftrightarrow (\exists e)\, \text{$W_{f(e,n)}$ is infinite}\]
for some computable function $f$.
We will define, uniformly in $n$, a computable structure $\mc{B}_n$ such that if $n \in S$, then $\mc{B}_n \cong \mc{A}$, and if $n \notin S$, then $\mc{B}_n$ is not finitely generated. We may assume that at each stage $s$, there is at most one $e$ for which an element is enumerated into $W_{f(e,n)}$.

For convenience, we will suppress $n$, writing $\mc{B}$ for $\mc{B}_n$ and $f(e)$ for $f(e,n)$. We will build $\mc{B}$ with domain $\omega$ as a union of finite substructures (in a finite sublanguage) $\mc{B}[s]$, viewing the language as a relation language as is usual for this kind of construction.

Since $\mc{A}$ sits properly inside itself as a $\Sigma^0_1$-elementary substructure, we can create an infinite chain
\[ \mc{A}_0 \prec_1 \mc{A}_1 \prec_1 \mc{A}_2 \prec_1 \cdots \prec_1 \mc{A}^* \]
where each $\mc{A}_i$ is (effectively) isomorphic to $\mc{A}$ and $\mc{A}_i$ is a c.e.\ (but not necessarily computable) subset of $\mc{A}_{i+1}$. The structure $\mc{A}^*$ is the union of all of the $\mc{A}_i$'s, and is not finitely generated (and hence not isomorphic to $\mc{A}$).

At each stage $s$, the domain of $\mc{B}[s]$ will be the union of finitely many unary relations $R_0[s] \subseteq \cdots \subseteq R_{k_s}[s]$. We will also have computable partial embeddings $j[s] \colon \mc{B}[s] \to \mc{A}^*$ such that $j[s](R_k[s]) \subseteq \mc{A}_k$.

We will build $R_0$ isomorphic to $\mc{A}_0$, $R_1$ isomorphic to $\mc{A}_1$, and so on, via $j$. While $W_{f(e)}$ does not have any elements enumerated into it, we will keep building $R_e$ to copy $\mc{A}_e$. However, when an element is enumerated into $W_{f(e)}$ we will collapse each $R_j$, $j > e$ into $R_{e}$. If $e$ is least such that $W_{f(e)}$ is infinite, then $\mc{B}$ will consist just of the domain $R_{e}$, as each $R_j$, $j > e$, will be collapsed infinitely many times, and $\mc{B}$ will be isomorphic to $\mc{A}$. On the other hand, if each $W_{f(e)}$ is finite, then $\mc{B}$ will be isomorphic to $\mc{A}^*$, and hence $\mc{B}$ will not be isomorphic to $\mc{A}$.

\bigskip{}

\noindent \textit{Construction.} Begin at stage $0$ with $\mc{B}[0]$ empty and $k_0 = 0$, with $R_0[0]$ empty.

\medskip{}

\noindent \textit{Action at stage $s+1 = 3t+1$.} Set $k = k_s$. We will have $k_{s+1} = k$. For each $n = 0,\ldots,k$, let $a_n$ be the first element of $\mc{A}_n$ not in $j[s](R_n[s])$. Define $\mc{B}[s+1] \supseteq \mc{B}[s]$ so that $j[s+1] \colon \mc{B}[s+1] \to \mc{A}^*$ is a partial embedding, extending $j[s]$, whose range also contains $a_0,\ldots,a_k$. Given $x \in \mc{B}[s+1]$, set $R_n[s+1]$ to be $R_n[s]$ plus the elements $x$ such that $j(x)$ is among the first $s$ elements of $\mc{A}_n$.

\medskip{}

\noindent \textit{Action at stage $s+1 = 3t+2$.} Set $k_{s+1} = k_s + 1$ and $j[s+1]= j[s]$. Let $R_{k_{s+1}}$ be empty. For each $n = 0,\ldots,k_s$, let $R_n[s+1] = R_n[s]$.

\medskip{}

\noindent \textit{Action at stage $s+1 = 3t+3$.} If for some $e < k_s$, an element entered $W_{f(e)}$ at stage $t$, do the following. Otherwise, do nothing. Let $k_{s+1} = e$. Let $\bar{u}$ be the elements of $R_e[s]$ and let $\bar{v}$ be the other elements of $\mc{B}[s]$ which are not in $R_e[s]$. Let $\psi(\bar{x},\bar{y})$ be the conjunction of the atomic diagram of $\mc{B}[s]$, so that $\mc{B}[s] \models \psi(\bar{u},\bar{v})$. Then $\mc{A}_{k_s} \models \psi(j[s](\bar{u}),j[s](\bar{v}))$. Since $j[s](\bar{u}) \in \mc{A}_{e} \prec_1 \mc{A}_{k_s}$, there is a tuple $\bar{a} \in \mc{A}_e$ such that $\mc{A}_e \models \psi(j[s](\bar{u}),\bar{a})$. Then define $R_e[s+1] = R_e[s] \cup \{\bar{v}\}$ and define $j[s+1] \supseteq j[s] \upharpoonright_{R_e[s]}$ to map $\bar{v}$ to $\bar{a}$. For $n < e$, define $R_n[s+1] = R_n[s]$.

\medskip{}

Note that at every stage $s$, $j[s](R_n) \subseteq \mc{A}_n$.

\medskip{}

\noindent \textit{End construction.}

\bigskip{}

Let $k = \liminf_s k_s$. If $n \in S$, then $k$ is the least $e$ such that $W_{f(e)}$ is infinite. Otherwise, if $n \notin S$, then $k = \infty$.

\begin{claim}
Fix $n \leq k$. Let $s$ be a stage such that $k_s \geq n$ and after which no element is ever enumerated into $W_{f(e)}$ for any $e < n$. Then:
\begin{enumerate}
	\item for all $t_2 > t_1 \geq s$, $R_n[t_1] \subseteq R_n[t_2]$ and $j[t_1] \upharpoonright R_n[t_1] \subseteq j[t_2]$.
	\item $R_n = \bigcup_{t \geq s} R_n[t]$ is a substructure (in the relational language) of $\mc{B}$.
	\item $j_n = \bigcup_{t \geq s} j[t] \upharpoonright R_n[t]$ is an isomorphism between $R_n$ and $\mc{A}_n$.
\end{enumerate}
Given $m \leq n \leq k$, $R_m \subseteq R_n$.
\end{claim}
\begin{proof}
(1) is easy to see from the construction. (2) is also clear. For (3) it remains to see that $j_n$ is surjective onto $\mc{A}_n$. If $a \in \mc{A}_n$ is the least element which is not in the image of $j$, then there is some stage $t \geq s$ at which each lesser element of $\mc{A}_n$ is already in the image of $j[t]$, and $a$ is among the first $t$ elements of $\mc{A}_n$. For each lesser element $a'$ of $\mc{A}_n$, $a' = j[3t+1](b')$ for some $b'$, and $b' \in R_n[3t+1]$; hence $j[t'](b') = a'$ at each later stage $t' \geq 3t + 1$. Then at some stage, say, $3t + 4$, we put $a$ into the image of $j$, say with $j(b) = a$, and we have $b \in R_n[3t+4]$, so that $j[t'](b) = a$ at each later stage $t' \geq 3t+4$. This is a contradiction; thus $j$ contains all of $\mc{A}_n$ in its image.
\end{proof}

\begin{claim}
$\mc{B} = \bigcup_{n \leq k} R_n$.
\end{claim}
\begin{proof}
If an  element enters $W_{f(e)}$ at stage $t$, and no element ever enters $W_{f(e')}$, for $e' < e$, after stage $t$, then $\mc{B}[3t+3] = R_e[3t+3] \subseteq R_e$. If $k < \infty$, then there are infinitely many stages $3t+3$ at which $\mc{B}[3t+3] = R_k[3t+3]$, and so $\mc{B} = R_k$. If $k = \infty$, then there is a sequence $(e_1,t_1),(e_2,t_2),(e_3,t_3),\ldots$, with $e_1 < e_2 < e_3 < \cdots$ and $t_1 < t_2 < t_3 < \cdots$, at which $\mc{B}[3t_i + 3] \subseteq R_{e_i}[3t_i + 3] \subseteq R_{e_i}$. Then $\mc{B} = \bigcup_{n \leq k} R_n$.
\end{proof}

\begin{claim}
If $m \in S$, then $\mc{B}_m \cong \mc{A}$.
\end{claim}
\begin{proof}
We have $k < \infty$. Then $\mc{B}_m = \bigcup_{n \leq k} R_n = R_k$, and $R_k$ is isomorphic to $\mc{A}$ via $j_k$.
\end{proof}

\begin{claim}
If $m \notin S$, then $\mc{B}_m$ is not finitely generated.
\end{claim}
\begin{proof}
Fix a tuple $\bar{g} \in \mc{B}_m$. Then $\bar{g} \in R_n$ for some $n$. Pick $a \in \mc{A}_{n+1} \setminus \mc{A}_n$. Since $a \notin \mc{A}_n$, $a \notin j(R_n)$. Thus there is $h \in R_{n+1} \setminus R_n$ with $j(h) = a$. Thus $R_n$ is a proper substructure of $\mc{B}$. Since $\bar{g} \in R_n$, $\bar{g}$ cannot generate $\mc{B}$.
\end{proof}

\noindent This completes the proof of the theorem.
\end{proof}

\begin{proof}[Proof of (1)$\Rightarrow$(2) in Theorem \ref{main theorem}]
Let $\mc{A}$ be a finitely generated self-reflective structure which has a d-$\Sigma^0_2$ Scott sentence. Let $X \geq_T \mc{A}$ be such that this Scott sentence is $X$-computable. Then by Theorem \ref{completeness}, the index set $I^X(\mc{A})$ is $\Sigma^0_3(X)$ m-complete relative to $X$, contradicting that $I^X(\mc{A})$ is in d-$\Sigma^0_2(X)$.
\end{proof}

\section{Finitely generated fields}\label{finitely generated fields}

It is not hard to show that every finitely-generated field is self-reflective, and hence has a d-$\Sigma^0_2$ Scott sentence.

\begin{proof}[Proof of Theorem \ref{thm:field}]
Let $F$ be a finitely generated field of characteristic $p$ which is possibly zero. We claim that $F$ is not self-reflective, and hence by Theorem \ref{thm:not-refl}, $F$ has a d-$\Sigma^0_2$ Scott sentence.

Let $\mathbb{F}_p$ be the prime field of characteristic $p$. Write $F = \mathbb{F}_p(a_1,\ldots,a_m,b_1,\ldots,b_n)$, with $a_1,\ldots,a_m$ a transcendence basis for $F$ over $\mathbb{F}_p$, and let $\varphi \colon F \to E \subsetneq F$ be an isomorphism between $F$ and a proper subfield $E$ of $F$. We claim that $E$ is not a $\Sigma^0_1$-elementary substructure of $F$.

Let $a_1',\ldots,a_m'$ be the images of $a_1,\ldots,a_m$ under $\varphi$, and let $b_1',\ldots,b_n'$ be the images of $b_1,\ldots,b_n$ under $\varphi$. Since $F$ and $E = \mathbb{F}_p(\bar{a}',\bar{b}')$ are isomorphic, $a_1',\ldots,a_m'$ are a transcendence base for $E$, and so $\bar{a},\bar{b}$ are algebraic over $\mathbb{F}_p(\bar{a}',\bar{b}')$. Thus the atomic type $\tp_{\text{at}}(\bar{a},\bar{b} / \mathbb{F}_p(\bar{a}',\bar{b}'))$ is isolated by a formula $\psi(\bar{a}',\bar{b}',\bar{x},\bar{y})$. We claim that there is no tuple $\bar{c},\bar{d} \in E$ with $E \models \psi(\bar{a}',\bar{b}',\bar{c},\bar{d})$. Suppose to the contrary that there was such a tuple $\bar{c},\bar{d}$; then $E(c,d)$ would be isomorphic to $F$ over $E$; but since $\bar{c},\bar{d} \in E$, $E(c,d) = E$, and so $E$ is isomorphic to $F$ over $E$. This cannot happen as $E$ is a proper subfield of $F$. This is a contradiction; thus $E$ is not a $\Sigma^0_1$-elementary substructure of $F$, proving the theorem.
\end{proof}

\section{Finitely generated groups}\label{finitely generated groups}

In this section, we first introduce some group theory background on HNN extensions (Section \ref{HNN extension}) and small cancellation theory (Section \ref{small cancellation}). Then we will use this machinery to construct a self-reflective group in Section \ref{self-reflective group}. We refer the interested reader to \cite[\S IV, \S V]{Ly77} for more details on the group theoretic tools we are using here.

\subsection{HNN Extensions}\label{HNN extension}

\begin{definition}
For a group $G$ with presentation $G = \langle S \mid R \rangle$ and an isomorphism $\alpha: H \to K$ between two subgroups $H,K \subseteq G$, we define the \emph{HNN extension} of $G$ by $\alpha$ to be
$$G*_\alpha = \langle S, t \mid R, \{ tht^{-1} = \alpha(h) \}_{h \in H} \rangle.$$
\end{definition}

The key lemma about HNN extensions we will need is the following, which says every trivial word in the HNN extension is either already trivial in $G$, or ``reducible'' by a conjugation of $t$ or $t^{-1}$.

\begin{lemma}[Britton's Lemma]
With the notation above, let \[w = g_0 t^{e_1} g_1 t^{e_2} \cdots t^{e_n} g_n \in G*_\alpha\] with $g_i\in G$, and $e_i = \pm 1$. Suppose $w = 1$, then one of the following is true:
\begin{enumerate}
\item $n =0$ and $g_0 = 1$ in $G$,
\item there is $k$ such that $e_k = 1$, $e_{k+1} = -1$, and $g_k \in H$, or
\item there is $k$ such that $e_k = -1$, $e_{k+1} = 1$, and $g_k \in K$.
\end{enumerate}
\end{lemma}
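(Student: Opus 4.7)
The statement is Britton's Lemma, a classical result about HNN extensions. My plan is to reduce it to the normal form theorem. Call a word $w = g_0 t^{e_1} g_1 \cdots t^{e_n} g_n$ in $G *_\alpha$ \emph{reduced} if it contains no pinch, i.e., no index $k$ with $e_k = +1$, $e_{k+1} = -1$, $g_k \in H$, or $e_k = -1$, $e_{k+1} = +1$, $g_k \in K$. The lemma is equivalent to the assertion that every reduced word of positive length is nontrivial in $G *_\alpha$. The easy half of this equivalence is that any word can be transformed into a reduced one of no greater $t$-length by repeatedly replacing a pinch $t h t^{-1}$ by $\alpha(h) \in K \subseteq G$, or $t^{-1} k t$ by $\alpha^{-1}(k) \in H \subseteq G$, and absorbing the result into the adjacent $G$-factors; each such move strictly decreases the number of $t^{\pm 1}$-symbols.

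To show that no reduced word of positive length equals $1$, I would construct a faithful permutation representation of $G *_\alpha$. Choose right transversals $T_H \subset G$ for the cosets $Hg$ and $T_K \subset G$ for $Kg$, each containing $1$. Let $N$ be the set of formal sequences $(g_0, e_1, x_1, \ldots, e_n, x_n)$ with $g_0 \in G$, $e_i \in \{\pm 1\}$, $x_i$ in the transversal dictated by $e_i$ (namely $T_H$ if $e_i = -1$ and $T_K$ if $e_i = +1$), satisfying the reducedness condition at each junction. Define an action of $G$ on $N$ by left multiplication on the first coordinate, and define the action of $t^{\pm 1}$ by formulas that either prepend a new $t$-syllable after factoring $g_0$ through the transversal, or cancel against a trailing syllable via $\alpha^{\pm 1}$ when doing so would otherwise produce a pinch.

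The main obstacle is to verify that these permutations of $N$ satisfy the defining relations $t h t^{-1} = \alpha(h)$ for all $h \in H$. This is a case analysis depending on the sign of the first $t$-syllable of the target sequence and on which coset the leading $G$-factor lies in; I would not grind through the routine verification here but refer to the standard treatment in Lyndon--Schupp \cite{Ly77}. Once the action is established, a reduced word of positive length, applied to the base point $(1) \in N$, produces a tuple of the same positive length, hence does not fix the base point and so cannot equal $1$ in $G *_\alpha$. Britton's Lemma follows by contrapositive: if $w = 1$ with $n \geq 1$, then $w$ cannot be reduced, which is exactly alternative (2) or (3) of the statement.
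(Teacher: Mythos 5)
The paper itself does not prove Britton's Lemma: it states the lemma as a classical fact about HNN extensions and refers the reader to Lyndon--Schupp \cite[\S IV]{Ly77} for the group-theoretic background. So there is no in-paper proof against which to compare your write-up; what you sketch is the standard argument (the Normal Form Theorem established via a faithful permutation representation of $G*_\alpha$ on coset-transversal sequences), which is the proof the cited reference would supply. In that sense you are faithfully reproducing the intended route, not inventing a new one.

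Two cautions, though. First, your transversal labeling is reversed relative to the paper's relation $tht^{-1}=\alpha(h)$, $\alpha\colon H\to K$. Pushing a $G$-factor leftward across a $t^{+1}$-syllable uses $th=\alpha(h)t$ for $h\in H$, so the canonical representative following $t^{+1}$ belongs to a right transversal for $H$; pushing across $t^{-1}$ uses $t^{-1}k=\alpha^{-1}(k)t^{-1}$ for $k\in K$, so the representative following $t^{-1}$ belongs to a transversal for $K$. You have $T_H$ paired with $e_i=-1$ and $T_K$ with $e_i=+1$, which is the opposite; this sign slip would derail the relation-check if carried through literally, although it is easily corrected. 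Second, and more to the point, you explicitly decline to verify that the defined permutations of $N$ satisfy $tht^{-1}=\alpha(h)$, deferring to Lyndon--Schupp --- but that verification is the whole content of the theorem. As written this is an outline of the standard proof rather than a proof; which, to be fair, is exactly as much as the paper itself provides.
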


\noindent One can show using Britton's Lemma that the natural homomorphism from $G$ to $G*_\alpha$ is an embedding, so that we can think of $G$ as a subgroup of $G*_\alpha$.

\subsection{Small Cancellation}\label{small cancellation}

\begin{definition}
We say a presentation $\langle S \mid R \rangle$ is \emph{symmetrized} if every relation is cyclically reduced and the relation set $R$ is closed under inverse and cyclic permutation.

Let $\langle S \mid R \rangle$ be a symmetrized presentation. We say a word $u \in F(S)$ is a \emph{piece} if there are two $r_1 \neq r_2 \in R$ such that $u$ is an initial subword of both $r_1$ and $r_2$. We also say the presentation satisfies the \emph{$C'(\lambda)$ small cancellation hypothesis} if for every relation $r$ and every piece $u$ with $r = uv$, we have $|u| < \lambda |r|$.
\end{definition}

Furthermore, we shall say a non-symmetrized presentation satisfies the small cancellation hypothesis if it does once we replace the relation set with its symmetrized closure. We shall also say a group is a small cancellation when it is clear which presentation we are using.

The key lemma we will need for small cancellation groups is the following, which says that every presentation of the trivial word must contain a long common subword with a relator.

\begin{lemma}[Greendlinger's Lemma]
Let $G = \langle S \mid R \rangle$ be a $C'(\lambda)$ small cancellation group with $0 \leq \lambda \leq \frac16$. Let $w$ be a non-trivial freely reduced word representing the trivial element of $G$. Then there is a cyclic permutation $r$ of a relation in $R$ or its inverse with $r = uv$ such that $u$ is a subword of $w$, and $|u| > (1-3\lambda)|r|$.
\end{lemma}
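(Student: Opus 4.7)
The plan is to work with van Kampen diagrams. Since $w = 1$ in $G$ and $w$ is non-trivial and freely reduced, there is a non-empty van Kampen diagram $D$ for $w$: a finite, simply connected planar $2$-complex whose $2$-cells are labeled by cyclic conjugates of relators in $R$ (or their inverses) and whose boundary cycle spells $w$. Choose such a $D$ with the minimum possible number of $2$-cells. Minimality forces $D$ to be \emph{reduced} in the sense that no two distinct $2$-cells meet along an arc whose boundary labels cancel; otherwise, collapsing the offending pair would yield a smaller diagram for $w$, contradicting minimality. After collapsing valence-$2$ interior vertices, we may also assume every interior vertex of $D$ has valence at least $3$.

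The $C'(\lambda)$ hypothesis now translates into a uniform bound on interior arcs. Any maximal arc along which two distinct $2$-cells of $D$ meet, or along which a $2$-cell meets itself, carries a label that is a piece, and hence has length strictly less than $\lambda |r|$, where $r$ is the boundary label of an adjacent face. Call a $2$-cell of $D$ \emph{exterior} if its boundary meets $\partial D$. The lemma reduces to showing that there exists an exterior $2$-cell $\Delta$ whose boundary decomposes as $\partial \Delta = u v$, with $u$ an arc of $\partial D$ and $v$ a concatenation of at most three interior maximal arcs; for then $|v| < 3 \lambda |r|$ forces $|u| > (1 - 3 \lambda) |r|$, and $u$ is the desired subword of $w$.

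The existence of such a ``three-sided shell'' is the heart of the lemma, and is proved by a counting argument on the planar $2$-complex $D$. One applies Euler's formula $V - E + F = 1$ together with the valence-$\geq 3$ assumption on interior vertices, and double-counts edges versus face perimeters. The $C'(\lambda)$ bound implies each face boundary consists of more than $1/\lambda \geq 6$ pieces, so each face has ``degree'' at least $7$ in the appropriate combinatorial sense; this is the small cancellation input. If every exterior face had $4$ or more interior arcs on its boundary, the resulting inequality from Euler's formula would be violated, producing a contradiction. Hence some exterior face has at most three interior arcs, yielding the required $\Delta$ and hence $r$ and $u$.

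The main obstacle is this final Euler-characteristic tally, which requires care in the degenerate cases: when $D$ has only one $2$-cell (in which case the conclusion is immediate, as $\partial D$ already reads a cyclic conjugate of a relator), when $\partial D$ revisits interior vertices, and when $D$ fails to be a simple planar complex. The standard way to handle these uniformly is to pass to the ``reduced'' diagrams of Lyndon--Schupp and apply their curvature-style accounting; I would follow the exposition in \cite[\S V]{Ly77} for the bookkeeping rather than reprove it from scratch.
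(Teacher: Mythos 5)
The paper does not prove Greendlinger's Lemma; it cites it as classical background (with the remark ``we refer the interested reader to \cite[\S IV, \S V]{Ly77}''). So there is no internal proof to compare against. Your sketch is a faithful outline of the standard Lyndon--Schupp argument: pass to a reduced van Kampen diagram $D$ of minimal area, observe that minimality forces interior arcs to be labeled by pieces (hence of length $< \lambda|r|$ by $C'(\lambda)$), and then run a combinatorial/Euler-characteristic argument to locate a boundary face whose boundary is a single exterior arc on $\partial D$ followed by at most three interior arcs. You are also right that this approach effectively reproves Greendlinger's lemma from the structure theorem on $C'(\lambda)$ diagrams, which is exactly how \cite{Ly77} organizes it.

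Two small points worth flagging. First, the ``degree at least $7$'' bound applies to \emph{interior} faces (those whose entire boundary consists of pieces); boundary faces can have fewer interior arcs, and the discharging argument actually exploits this asymmetry --- the hypothesis $\lambda \le 1/6$ enters precisely to force interior faces to have $\ge 7$ sides, which is what makes the Euler-characteristic tally come out negative if no ``shell'' exists. Second, the conclusion requires not merely a face with $\le 3$ interior arcs but one whose exterior portion is a \emph{single connected} arc of $\partial D$; this is what you call a three-sided shell, and it is the genuinely delicate part of the counting, especially when $D$ is not a topological disc (cut vertices, spherical components, etc.). You correctly punt these degeneracies to \cite{Ly77}, which is what the paper itself does, so the sketch is acceptable as written.
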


We say that a word $w$ is \emph{Dehn-minimal} if it does not contain any subword $v$ that is also a subword of a relator $r = vu$ such that $|v| > |r| / 2$. Greendlinger's lemma implies that, given a $C'(1/6)$ presentation of a group, we can solve the word problem using the following observation: a Dehn-minimal word is equivalent to the identity if and only if it is the trivial word. Given a word $w$, we replace $w$ by equivalent words of shorter length until we have replaced $w$ by a Dehn-minimal word $w'$. Then $w$ is equivalent to the identity if and only if $w'$ is the trivial word. This is the Dehn's algorithm.

\subsection{A self-reflective group}\label{self-reflective group}

Let $T$ be the tree (directed acyclic graph) with vertex set $V(T) = \{ (n,\tau) \mid n \in \omega\text{ and }\tau \in \mathbb{Z}^{<\omega}\}$. The parent $(n,\tau)^-$ of $(n,\tau)$ is $(n,\tau^-)$ if $\tau \neq \la \ra$, and $(n+1,\la \ra)$ otherwise. See Figure \ref{treefigure}.

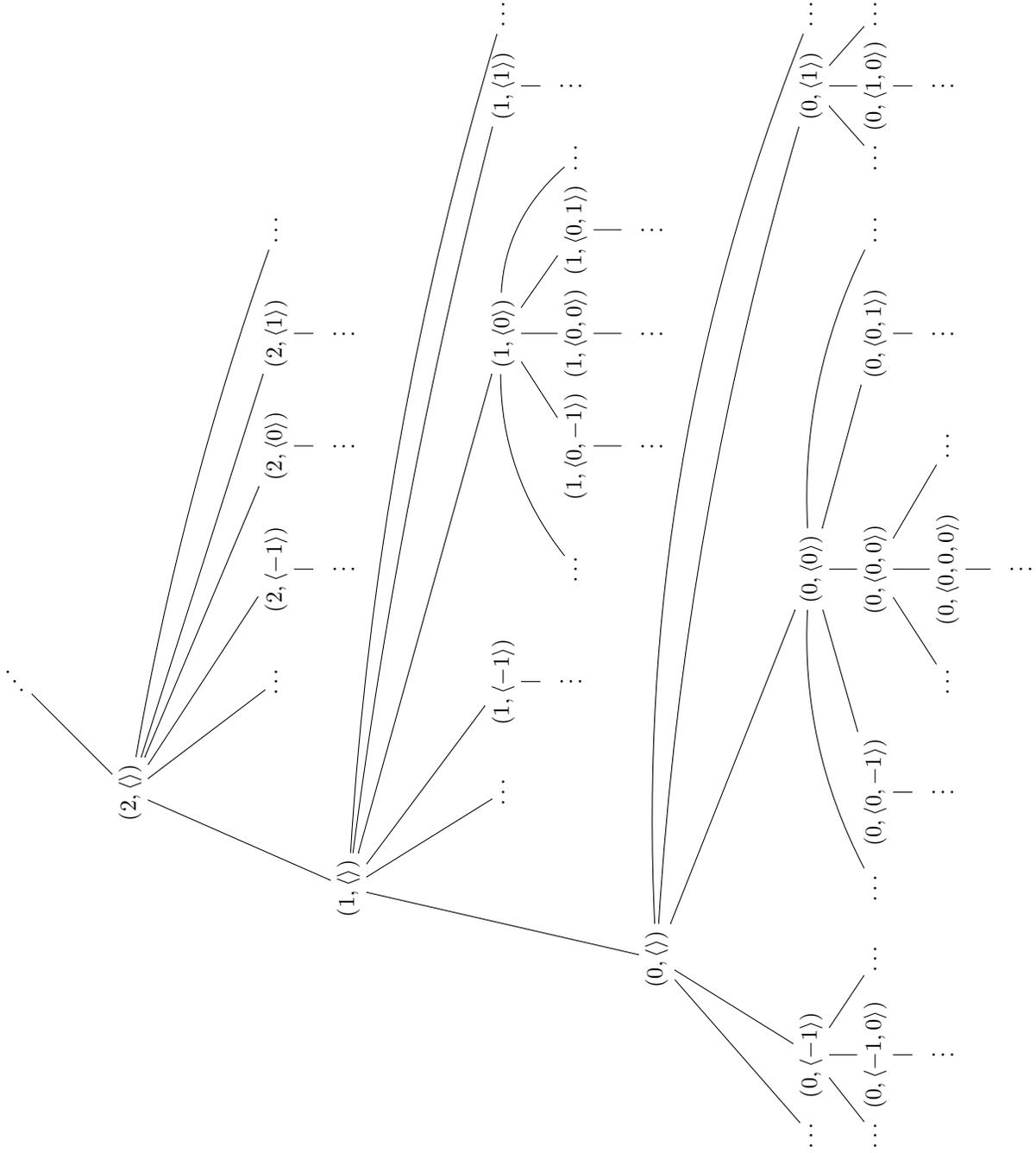
\begin{sidewaysfigure}[pt]
\[ \xymatrix @C=0pc @R=1pc{
&&&&&\iddots \ar@{-}[ddl]\\\\
&&&&(2,\la \ra)\ar@{-}[ddddl]\ar@{-}[dddr]\ar@{-}[dddrr]\ar@{-}[dddrrr]\ar@{-}[dddrrrr]\ar@{-}@/^/[dddrrrrr]\\\\\\
&&&&&\cdots & (2,\la -1 \ra)\ar@{-}[d] & (2,\la 0 \ra)\ar@{-}[d] & (2,\la 1 \ra)\ar@{-}[d] & \cdots
\\
&&&(1,\la \ra)\ar@{-}[dddddl]\ar@{-}[dddr]\ar@{-}[dddrr]\ar@{-}[dddrrrrr]\ar@{-}@/^/[dddrrrrrrrr]\ar@{-}@/^1pc/[dddrrrrrrrrr]&&&\vdots&\vdots&\vdots
\\\\\\
&&&&\cdots & (1,\la -1 \ra)\ar@{-}[d] &&& (1,\la 0 \ra)\ar@{-}[d]\ar@{-}[dl]\ar@{-}@/_1pc/[dll]\ar@{-}[dr]\ar@{-}@/^1pc/[drr] &&& (1,\la 1 \ra)\ar@{-}[d] & \cdots
\\
&&&&&\vdots & \cdots & (1,\la 0,-1 \ra)\ar@{-}[d] & (1,\la 0,0 \ra)\ar@{-}[d] & (1,\la 0,1 \ra)\ar@{-}[d] & \cdots & \vdots
\\
&&(0,\la \ra)\ar@{-}[dddll]\ar@{-}[dddl]\ar@{-}[dddrrrr]\ar@{-}@/^1pc/[dddrrrrrrrrr]\ar@{-}@/^2pc/[dddrrrrrrrrrr]&& & & & \vdots & \vdots & \vdots &&&
 \\\\\\
\cdots & (0,\la -1 \ra)\ar@{-}[d]\ar@{-}[dr]\ar@{-}[dl] &&&&& (0,\la 0 \ra)\ar@{-}[d]\ar@{-}[dll]\ar@{-}@/_1pc/[dlll]\ar@{-}[drr]\ar@{-}@/^1pc/[drrr] &&&&& (0,\la 1 \ra)\ar@{-}[d]\ar@{-}[dr]\ar@{-}[dl] & \cdots
\\
\cdots & (0,\la -1,0 \ra)\ar@{-}[d] & \cdots & \cdots & (0,\la 0,-1 \ra)\ar@{-}[d] && (0,\la 0,0 \ra)\ar@{-}[d]\ar@{-}[dr]\ar@{-}[dl] && (0,\la 0,1 \ra)\ar@{-}[d] & \cdots & \cdots & (0,\la 1,0 \ra)\ar@{-}[d] & \cdots
\\
& \vdots & & & \vdots & \cdots & (0,\la 0,0,0 \ra)\ar@{-}[d] & \cdots & \vdots &&& \vdots
\\
&&&&&& \vdots
} \]
\caption{The tree $T$.}\label{treefigure}
\end{sidewaysfigure}

Let $u(x,y) = xyx^2y\cdots x^{100}y$ be a word in $F(x,y)$. Let $K$ be the group on generators $V(T)\cup\{a\} \cup B$ (where $B = \{ b_i \mid i \in \mathbb{Z}\}$) with relations:
\begin{itemize}
\item $u((n,\tau),a) = (n,\tau)^-$ for every $(n,\tau) \in T$.
\item $u((n,\tau),b_i) = (n,\tau\concat\la i\ra)$ for every $(n,\tau) \in T$ and $i\in\mathbb{Z}$.
\end{itemize}
Note that $K$ is generated by $(0,\la\ra), a$, and $B$: we can generate any vertex $(n,\la \ra)$ by $(1,\la \ra) = u((0,\la\ra),a)$, $(2,\la \ra) = u((1,\la\ra),a)$, and so on, and then we can generate, for example, $(2,\la 5,3 \ra)$, as $(2,\la 5,3 \ra) = u(u((2,\la \ra),b_5),b_3)$. Also note that $K$ is a $C'(\frac1{10})$ small cancellation group. Noting that any reduced word in $B$ is Dehn-minimal, we see that $B$ freely generates a free subgroup of $K$.

\begin{claim}\label{minimality of B}
Let $v$ be a word in $V(T), a, B$, such that $v$ is Dehn-minimal. Then $w$ is in the subgroup $F(B)$ of $K$ generated by $B$ if and only if $v$ is a word in $B$.
\end{claim}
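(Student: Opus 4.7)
The direction $(\Leftarrow)$ is obvious. For $(\Rightarrow)$, the plan is to assume $v$ is Dehn-minimal and represents an element of $F(B)$, and derive a contradiction from Greendlinger's Lemma under the supposition that some letter of $v$ lies outside $B^{\pm 1}$. Since $B$ freely generates a free subgroup of $K$ (as remarked immediately before the claim), there is a unique freely reduced word $v' \in F(B)$ representing the same element of $K$ as $v$. Consider $v(v')^{-1}$, which equals the identity in $K$, and let $w^\ast$ be its free reduction. If $w^\ast = \varepsilon$, then since $v$ and $v'$ are both freely reduced (Dehn-minimality forces free reduction), $v = v'$ as words, so $v$ is a word in $B^{\pm 1}$ and we are done. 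Otherwise, cancellation in $v(v')^{-1}$ only eats a common $B^{\pm 1}$-suffix at the seam, so $w^\ast$ decomposes as $\alpha\gamma^{-1}$ with $\alpha$ a non-empty prefix of $v$ and $\gamma$ a prefix of $v'$; in particular $\gamma^{-1}$ is a word in $B^{\pm 1}$.

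Now apply Greendlinger's Lemma to $w^\ast$ with $\lambda = 1/10$: there is a cyclic permutation $r$ of a relator or its inverse and a subword $u$ of $w^\ast$ with $|u| > 7|r|/10$ that is also a subword of $r$. The combinatorial heart of the proof is the observation that in the $b$-relator $u((n,\tau),b_i)((n,\tau)\concat\la i\ra)^{-1}$ the letters $b_i$ are separated by positive powers of the $V(T)$-generator $(n,\tau)$, and the trailing factor $((n,\tau)\concat\la i\ra)^{-1}$ is itself a $V(T)^{-1}$-letter; the $a$-relators contain no $B$-letters at all. Consequently every maximal run of $B^{\pm 1}$-letters in any cyclic permutation of a relator or its inverse has length at most $1$. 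The portion of $u$ lying inside $\gamma^{-1}$ is a suffix of $u$ whose letters are all in $B^{\pm 1}$, so by the run-length observation it has length at most $1$. Hence the portion of $u$ lying inside $\alpha$ has length greater than $7|r|/10 - 1$; since every defining relator has length $5151$, this is well above $|r|/2$. But $\alpha$ is a prefix of $v$, so this produces a subword of $v$ of length greater than $|r|/2$ that agrees with a subword of a cyclic permutation of a relator, contradicting the Dehn-minimality of $v$.

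The crux of the argument is the combinatorial observation about isolated $B^{\pm 1}$-letters in the relators; the rest is structural. The one bookkeeping point to be careful about is that the free reduction at the seam between $v$ and $(v')^{-1}$ really does stop at the boundary and give the clean decomposition $w^\ast = \alpha\gamma^{-1}$, which follows from the maximality of the cancelled common $B^{\pm 1}$-suffix: if any further cancellation were possible, the last letter of $\alpha$ would equal the last letter of $\gamma$, and that letter could have been absorbed into the common suffix, a contradiction.
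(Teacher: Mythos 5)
Your proof is correct and takes essentially the same route as the paper's: both consider a freely reduced concatenation of $v$ with the reduced $B$-word $v'$ representing the same element (the paper uses $v^{-1}v'$ after stripping a common prefix, you use $v(v')^{-1}$ after cancelling the common $B^{\pm 1}$-suffix), apply Greendlinger's Lemma, and then use the fact that no relator of $K$ contains two consecutive $B^{\pm 1}$-letters to conclude that the Greendlinger subword lands almost entirely in the $v$-part, violating Dehn-minimality.

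One small point worth making explicit: your claim that $\alpha$ is non-empty follows because, under your supposition that $v$ has a letter outside $B^{\pm 1}$, the cancellation at the seam (which only removes matching $B^{\pm 1}$-letters) cannot consume all of $v$; without that supposition it needs the observation that otherwise $\gamma^{-1}$ would be a non-trivial $B^{\pm 1}$-word equal to $1$, contradicting the freeness of $F(B)$.
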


\begin{proof}
The if direction is obvious. For the only if direction, assume we have a word $v$, in $V(T)$, $a$, and $B$, which is equal to a reduced word $v'$ in $B$. If $v'$ was the trivial word, then since $v$ is Dehn-minimal, $v$ would also be the trivial word. So we may assume that $v'$ is not the trivial world. Also, we may assume without loss of generality that $v$ and $v'$ have no common prefix, so that $v^{-1} v'$ is a reduced word. Then, by applying Greendlinger's lemma to $v^{-1}v'$, we get a subword $u$ of $v^{-1}v'$ which is also a subword of a relator $r$, with $|u| > (\frac{7}{10})|r|$. Noting that none of the relators of $K$ has two consecutive $b_i$'s, we see that the subword $u$ of $v^{-1} v'$ given by Greendlinger's lemma has to be contained in $v^{-1}$ except possibly the last letter of $u$. If $u'$ is the part of $u$ which is contained in $v^{-1}$, we have $|u'| \geq |u| - 1 > \frac{1}{2}|r|$ as $|r| > 100$. This contradicts the Dehn-minimality of $v$.
\end{proof}

Now let $G$ be the HNN extension $\la K, t \mid t b_i t^{-1} = \alpha(b_i) = b_{i+1}\ra$ of $K$ by the partial isomorphism $\alpha(b_i) = b_{i+1}$. $G$ is then finitely-generated by $(0,\la\ra), a, b_0,$ and $t$.

\begin{lemma}\label{SR group}
$G$ is self-reflective.
\end{lemma}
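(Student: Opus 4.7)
The plan is to exhibit a proper subgroup $H \subsetneq G$ isomorphic to $G$ via a shift along the spine of $T$, and to show this embedding is $\Sigma^0_1$-elementary. The natural candidate is $H = \la (1,\la\ra),\,a,\,b_0,\,t\ra$, with the isomorphism $\varphi \colon G \to H$ defined on the four distinguished generators by $(0,\la\ra) \mapsto (1,\la\ra)$ and the identity on $a, b_0, t$; tracking the tree relations gives $\varphi((n,\tau)) = (n+1,\tau)$ throughout. First I would verify by direct case analysis on the tree relations $u((n,\tau),a) = (n,\tau)^-$ and $u((n,\tau),b_i) = (n,\tau \concat \la i\ra)$ and on the HNN relation that $\varphi$ extends to a well-defined endomorphism of $G$. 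Injectivity of $\varphi$ then follows by exhibiting an explicit left inverse $\rho \colon G \to G$ that sends $(n,\tau) \mapsto (n-1,\tau)$ for $n \geq 1$, sends $(0,\tau) \mapsto (0, \la 0 \ra \concat \tau)$, and fixes $a, b_0, t$; a similar relation-check shows $\rho$ is a homomorphism and $\rho \circ \varphi$ is the identity on the four generators of $G$.

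Properness of $H$ reduces to showing $(0,\la\ra) \notin H$. By Britton's Lemma applied to $G = K *_\alpha$, since $(0,\la\ra) \in K$, any equality $(0,\la\ra) = w$ for $w$ a word over the generators of $H$ reduces to an equality in the subgroup of $K$ generated by $\{(1,\la\ra),\,a\} \cup B$. The freely reduced word $v = (0,\la\ra)^{-1} w$ would then be nonempty and trivial in $K$, so Greendlinger's Lemma would give a subword matching more than $7/10$ of some cyclic permutation of a relator. But the only $(0,\tau)$-letter of $v$ is the initial $(0,\la\ra)^{-1}$, while every relator involving $(0,\tau)$ contains runs of $(0,\tau)$ of lengths going up to $100$; no such relator can match a long subword of $v$. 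Only relators $r_{(n,\tau),\cdot}$ with $n \geq 1$ apply, and applying one never introduces or eliminates a $(0,\tau)$-letter; induction on Dehn reductions then shows $v$ cannot reduce to the empty word, a contradiction.

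The main obstacle will be showing $H \preceq_1 G$. By the Tarski-Vaught criterion it suffices to show that for every finite tuple $\bar{h} \in H$ and every quantifier-free formula $\psi(\bar{x},\bar{h})$, any witness $\bar{g} \in G$ for $\exists \bar{x}\,\psi(\bar{x},\bar{h})$ can be replaced by a witness in $H$. My plan is to exploit the self-similarity of $T$: for each $k \in \mathbb{Z}$ the substitution $\sigma_k((0,\tau)) = (1, \la k \ra \concat \tau)$, fixing all other generators, is a well-defined endomorphism of $G$ (the relation-check is identical to that for $\varphi$) whose image lies in $H$ and which fixes $H$ pointwise. Thus $\sigma_k(\bar{g})$ lies in $H$ and automatically satisfies the same equations as $\bar{g}$ does against $\bar{h}$. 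The delicate point is the finitely many inequations in $\psi$: each inequation $A \neq B$ is preserved by $\sigma_k$ exactly when $AB^{-1} \notin \ker(\sigma_k)$, the normal closure of the elements $(0,\tau)(1,\la k\ra \concat \tau)^{-1}$. I would choose $k$ fresh relative to the labels appearing in Britton-normal and Dehn-minimal representatives of $\bar{g}$ and $\bar{h}$, and then use small cancellation together with the Britton normal form in the HNN extension to argue that sufficient freshness forces each such $AB^{-1}$ out of $\ker(\sigma_k)$ unless it was already trivial in $G$. This combinatorial freshness argument, leveraging the long runs of $x$ in $u(x,y) = xyx^2y \cdots x^{100}y$ and the $C'(1/10)$ hypothesis, is the hardest part of the proof.
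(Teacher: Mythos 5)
Your plan matches the paper's proof almost step for step: you use the same subgroup $H = \langle (1,\langle\rangle), a, b_0, t\rangle$, the same shift map $\iota = \varphi$, the same proof of properness by reducing to a word without $t$ (Britton) and then applying Greendlinger, and the same family of retractions $\sigma_k$ sending $(0,\tau)\mapsto (1,\langle k\rangle\concat\tau)$ with $k$ chosen fresh. Your one genuine departure is injectivity of $\varphi$: the paper re-runs the Britton-plus-Greendlinger machinery, whereas you exhibit an explicit left inverse $\rho$ with $(n,\tau)\mapsto(n-1,\tau)$ for $n\geq 1$ and $(0,\tau)\mapsto(0,\langle 0\rangle\concat\tau)$. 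This is a clean shortcut and it does work: the relation-check goes through (the only interesting case is $u((0,\langle\rangle),a)=(1,\langle\rangle)$, which $\rho$ sends to the valid relation $u((0,\langle 0\rangle),a)=(0,\langle\rangle)$ since $(0,\langle 0\rangle)^-=(0,\langle\rangle)$), $\rho$ fixes $t$ and each $b_i$ so respects the HNN relations, and $\rho\circ\varphi=\mathrm{id}$ is evident on generators. The tree was designed to be self-similar in exactly this way, so it is natural that such a retraction exists; it avoids the Dehn-minimality bookkeeping entirely for this step.

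Where you stop short is precisely where you say you do, in the ``freshness'' argument for $\Sigma^0_1$-elementarity. Conceptually you have the right target — show $\kappa(s)\neq 1$ for all $s$ in a finite set $S$ avoiding the identity by taking $k$ large — and the right tools, but you have not carried out the case analysis. For the record, the paper's version is not a long induction on Britton/Dehn reductions as your framing suggests; it is a one-shot argument. One shows first that a shortest spelling of $s$ contains no $t$ (else Britton plus Claim \ref{minimality of B} lets you peel and shorten), and then applies Greendlinger once to $\kappa(s)$: the offending relator is either entirely within the ``old'' letters (Case 3, same relator applies to $s$), or involves a vertex $(1,\langle i\rangle\concat\sigma)$ with $|i|\geq n$ that must pull back to $(0,\sigma)$ (Cases 1--2, and the matching relator on $s$ works because $(0,\langle\rangle)^- = (1,\langle\rangle) = (1,\langle i\rangle)^-$). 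That three-case split is the whole content of ``freshness''; if you fill it in your proof is complete.
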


\begin{proof}
Let $H \subseteq G$ be the subgroup generated by $(1,\la\ra),a,b_0$, and $t$. We claim that $H$ is a proper $\Sigma^0_1$-elementary subgroup of $G$ which is isomorphic to $G$.

\begin{claim}
$H$ is isomorphic to $G$.
\end{claim}
\begin{proof}
Define the homomorphism $\iota:G \to H \subseteq G$ given by sending $(n,\tau)$ to $(n+1,\tau)$ and fixing $a$, $b_i$, and $t$. We must check that this does indeed define a homomorphism:
\begin{itemize}
	\item $u(\iota(n,\tau),\iota(a)) = u((n+1,\tau),a) = (n+1,\tau)^- = \iota((n,\tau)^{-})$ for every $(n,\tau) \in T$.
	\item $u(\iota(n,\tau),\iota(b_i)) = u((n+1,\tau),b_i) = (n + 1,\tau\concat\la i\ra) = \iota((n,\tau\concat\la i\ra))$
	\item $\iota(t) \iota(b_i) \iota(t)^{-1} = t b_i t^{-1} = b_{i+1} = \iota(b_{i+1})$
\end{itemize}
Since $\iota$ maps relators of $G$ to relators of $G$, it defines a homomorphism.

Now we will check that $\iota$ is an embedding. Suppose $\iota(v) = 1$ for some word $v$ in $V(T), a, B, t$. Without loss, we may assume $v\neq 1$ is a word of minimum length among the words representing the same group element. By abusing notation, we will use $\iota(v)$ to denote the word obtained by replacing each $(n,\sigma)$ in $v$ by $(n+1,\sigma)$; this is a word that represents the group element $\iota(v)$.

Now since $\iota(v) = 1$, by Britton's lemma, either $\iota(v)$ does not contain $t, t^{-1}$, or it contains a subword $tut^{-1}$ or $t^{-1}ut$ with $u\in F(B)$. We claim that we must be in the first case, where $\iota(v)$ (and hence $v$) does not contain $t$ or $t^{-1}$. In the second case, if $\iota(v)$ does contain a subword $tut^{-1}$ or $t^{-1}ut$ with $u\in F(B)$, we can write $u = \iota(u')$, where $t u' t^{-1}$ or $t^{-1} u' t$ appears as a subword of $v$ as $\iota$ leaves $t$ unchanged. If we can show that $u$ is Dehn-minimal and so, by Lemma \ref{minimality of B}, $u$ is actually a word in $B$, then, as $\iota$ leaves $B$ unchanged, $u'$ would also be a word in $B$. By conjugating each $b_i$ in $u'$ by $t$ (or $t^{-1}$) to get $b_{i+1}$ (or $b_{i-1}$), we get a shorter word representing the same element, contradicting the minimality of $v$. We will now argue that $u$ is Dehn-minimal. If $u$ was not Dehn-minimal, this would be witnessed by a subword $w$ of a relator $r$, with $|w| > \frac{1}{2} |v|$. Then looking at all of relators of $K$, we see that $w = \iota(w')$ and $r = \iota(r')$ where $w'$ is a subword of a relator $r'$ of $K$, and also a subword of $u'$. Thus $v$ is not of minimal length, a contradiction. So $u$ is Dehn-minimal, and so $\iota(v)$ does not contain $t, t^{-1}$.

Since $\iota(v)=1$ and contains only $V(T)$, $a$, and $B$, by Greendlinger's lemma, $\iota(v)$ is not Dehn-minimal. However, since any relator $r$ that holds on $\iota(V(T))$, $a$, and $B$ is the image, under $\iota$, of a relator that holds on $V(T)$, $a$, and $B$, this shows that $v$ is also not Dehn-minimal, a contradiction. 
\end{proof}

\begin{claim}
$H$ is a proper subgroup of $G$.
\end{claim}

\begin{proof}
We will show that $(0,\la\ra)\notin H$. Suppose $(0,\la\ra)\in H$. Choose a shortest spelling $v$ of $(0,\la\ra)$ in $\iota(V(T)),a,B, t$. By applying Britton's lemma to $(0,\la\ra)^{-1}v$ and using the same argument as above, we see that $v$ does not contain $t$. Thus, we may apply Greendlinger's lemma on $(0,\la\ra)^{-1}v$ to get a subword that is also a subword of some relator $r$ with length more than half of the length of $r$. However, this subword can not contain $(0,\la\ra)^{-1}$, as the only relation containing $(0,\la\ra)^{-1}$ but not $(0,\sigma)$ for any $\sigma \neq \la\ra$ is $u((0,\la\ra),a) = (1,\la\ra)$, but any long subword of it will contain more than one instance of $(0,\la\ra)$. Thus, the subword must be strictly in $v$, and contradicts the minimality of $v$.
\end{proof}

\begin{claim}
$H$ is a $\Sigma^0_1$-elementary subgroup of $G$.
\end{claim}
\begin{proof}
We only need to show that for every tuple $\overline{g}\in G$, and every quantifier free formula $\psi(\overline{x},\overline{y})$ such that $G\models \psi((1,\la\ra), a, b_0, t, \overline{g})$, there is a tuple $\overline{h}\in H$ such that $H\models \psi((1,\la\ra), a, b_0, t, \overline{h})$. It suffices to show the (stronger) statement that for every finite subset $1\notin S\subset G$, there is a (retraction) $\kappa:G\to G$ such that $\kappa\vert_H = \operatorname{id}_H$, $\kappa(G) = H$, and $1\notin \kappa(S)$. Fixing a shortest spelling in $V(T),a,B,t$ for each element in $S$, we define $\kappa$ by fixing the generators of $H$ and sending $(0,\la\ra)$ to $(1,\la n\ra )$ for $n$ sufficiently large relative to the (length and subscripts of) spelling of elements of $S$.

Suppose there is $s\in S$ with $\kappa(s) = 1$. Write $s$ in the shortest spelling fixed above. We spell $\kappa(s)$ by replacing every $(0,\tau)$ in the shortest spelling of $s$ by $(1,n\concat\tau)$. By Britton's lemma, either there is no $t$ in $\kappa(s)$, or there is a subword $tvt^{-1}$ or $t^{-1}vt$ with $v\in F(B)$. In the second case, by minimality of $s$ and Claim \ref{minimality of B}, we see that $v$ only contains letters $b_i$'s, and thus we can reduce the length of $s$ by replacing each $b_i$ by $b_{i+1}$ (or $b_{i-1}$) to get a shorter spelling of $s$, a contradiction. Thus, $s$ does not have any $t$ in it.

Now, applying Greendlinger's lemma to $\kappa(s)$, we get a subword of $\kappa(s)$ that can be replaced by a shorter string. We will argue that a corresponding replacement can also be carried out for $s$, possibly with a different relator, contradicting the minimality of $s$. We divide into three cases, depending on which relator is used. First, not that the replacement cannot be given by any relator involving $b_m$ for $|m| \geq n$ since $n \gg 1$ implies $s$ does not contain the letter $b_m$ in it; thus the following three cases exhaust the possibilities.

\begin{case}
The relator is $u((1,\la i\ra \concat\sigma),a) = (1,\la i\ra \concat \sigma)^-$ for $|i| \geq n$.
\end{case}

\noindent Since $n \gg 1$, each instance of $(1,\la i\ra \concat\sigma)$ in $\kappa(s)$ comes from an instance of $(0,\sigma)$ in $s$, and each instance of $(1,\la i\ra \concat \sigma)^-$ comes from an instance of $(0,\sigma)^-$ in $s$. (It is important here that $(0,\la \ra)^- = (1,\la \ra) = (1,\la i\ra)^-$.) Thus we can perform a replacement in $s$ using the relator $u((0,\sigma),a) = (0,\sigma)^-$.

\begin{case}
The relator is $w((1,\la i \ra \concat \sigma),b_k) = (1,\la i \ra \concat\sigma\concat\la k\ra)$ for $|i| \geq n$.
\end{case}

\noindent Since $i \gg 1$, each instance of $(1,\la i \ra \concat \sigma)$ in $\kappa(s)$ comes from an instance of $(0,\sigma)$ in $s$, and each instance of $(1,\la i \ra \concat\sigma\concat\la k\ra)$ in $\kappa(s)$ comes form an instance of $(0,\sigma \concat \la k \ra)$ in $s$. Thus we can perform a replacement in $s$ using $w((0,\sigma),b_k) = (0,\sigma\concat\la k\ra)$.

\begin{case}
The relator does not involve any letters $(1,\la i \ra \concat \sigma)$ with $|i| \geq n$.
\end{case}

\noindent In this case, we can apply exactly the same relator to $s$.
\end{proof}

\noindent Thus we have shown that $G$ contains a copy $H$ of itself as a $\Sigma^0_1$-elementary subgroup, and hence is self-reflective.
\end{proof}

\begin{proposition}
$G$ is computable.
\end{proposition}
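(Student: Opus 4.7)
The plan is to verify that $G$ has a decidable word problem; from this a computable copy follows in the standard way by taking $\omega$ to index freely reduced words in the four generators $(0,\la\ra), a, b_0, t$, using decidability of word equality to quotient to actual group elements, and implementing the operations by concatenation followed by reduction.

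I would establish decidability of the word problem in two stages. First, I argue that $K$ has a solvable word problem via Dehn's algorithm. Although $K$ has infinitely many generators and infinitely many relators, every relator of $K$ has length at most $102$ (since $|u(x,y)|=101$), so for any input word $w$ the only relators that can yield a Dehn reduction are those sharing a subword of length strictly greater than half a relator's length with $w$; only finitely many such relators exist and they can be effectively listed from $w$. Iteratively performing Dehn reductions terminates in a Dehn-minimal word, which by Greendlinger's lemma equals $1$ in $K$ iff it is empty.

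Second, I reduce the word problem for $G = K *_\alpha$ to that for $K$ via Britton's lemma. Given a freely reduced word $w$ in the generators of $G$, I search for a pinch, that is, a subword $t u t^{-1}$ or $t^{-1} u t$ with $u$ representing an element of $F(B)\subseteq K$. Here the key subroutine is deciding $F(B)$-membership of a given $K$-word, which is effective by Claim \ref{minimality of B}: Dehn-reduce the word and check whether the result is spelled using letters of $B$ only. When a pinch is found, I replace it by $\alpha^{\pm 1}(u)$ (obtained by shifting the subscripts of the $b_i$'s in $u$) and iterate. Each replacement strictly decreases the number of $t^{\pm 1}$'s in $w$, so the process terminates in a word containing no pinches. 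By Britton's lemma, a pinch-free word equals $1$ in $G$ iff it contains no $t^{\pm 1}$ at all and is trivial in $K$, both of which are decidable by the previous stage.

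The main obstacle is bookkeeping: ensuring that every effectiveness claim is uniform in the input so that a single Turing machine implements the whole procedure. The crucial observation is that from any finite word $w$ one can effectively list the finitely many relators of $K$ that could possibly participate in a reduction of $w$ or of any subword arising during the pinch-detection routine, using the length bound $102$ together with the letters appearing. Beyond this, no new ideas are required past Dehn's algorithm for $C'(1/10)$ small cancellation groups and Britton's lemma.
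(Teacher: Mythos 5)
Your proposal is correct and relies on the same three ingredients as the paper: Dehn's algorithm for the $C'(1/10)$ presentation of $K$ (with the observation that the finitely many letters appearing in the input bound the relators one must consult), Britton's lemma to detect and remove pinches, and Claim~\ref{minimality of B} to decide $F(B)$-membership. The only difference is organizational --- you separate the $K$-word problem and the pinch-removal into two stages with an explicit membership subroutine, whereas the paper runs a single loop performing all three kinds of length-decreasing replacements and verifies correctness at termination --- but the mathematics is the same.
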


\begin{proof}
We use the following algorithm to solve the word problem in $G$: for any string in $V(T), a, B, t$, we search and replace the following three types of subwords:
\begin{enumerate}
\item $tvt^{-1}$ with $v$ containing only $b_i$'s. Replace such subwords by deleting $t$ and $t^{-1}$ and replacing each $b_i$ by $b_{i-1}$.
\item $t^{-1}vt$ with $v$ containing only $b_i$'s. Replace such subwords by deleting $t^{-1}$ and $t$ and replacing each $b_i$ by $b_{i+1}$.
\item Subword $v$ such that $v$ is also a subword of a relator $r$ and $|v| > \frac12|r|$. Replace such subwords by the rest of the relator $r$ after deleting $v$.
\end{enumerate}
Since any word can only mention finitely many letters, there are only finitely many possible relators for case (3). Thus, even though we have infinitely many relators, the search in (3) is still finite. Since these replacements shorten the length of the word, for any input word, sequences of such replacements terminate. If the resulting word is trivial, we output ``The input word is equal to the identity.'', otherwise output ``The input word does not equal the identity.''

To verify this algorithm is valid, consider a word that represents the identity, on which the algorithms terminates with a non-trivial word $v$. Since we can not perform any more replacement of the third kind, $v$ is Dehn-minimal. Thus, by Lemma \ref{minimality of B} and Britton's lemma, we should be able to do a replacement of either the first or the second kind, a contradiction.
\end{proof}

\section{Finitely generated rings}\label{finitely generated rings}

In this section, we use the group ring construction to produce a ring that is self-reflective. Notice that the group ring $R[G]$ is computable if both $G$ and $R$ are.

\begin{theorem}
Let $G$ be the self-reflective group defined in Section \ref{finitely generated groups}. Then the group ring $R[G] = \{f:G\to R \mid |\operatorname{supp}(f)| < \infty \}$ over any finitely generated ring $R$ is also self-reflective.
\end{theorem}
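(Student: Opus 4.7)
The plan is to take $R[H]$ as the desired proper $\Sigma^0_1$-elementary subring of $R[G]$ isomorphic to $R[G]$, where $H$ is the self-reflective copy of $G$ constructed in Section \ref{self-reflective group}. The group isomorphism $\iota\colon G\to H$ extends $R$-linearly to a ring isomorphism $R[G]\cong R[H]$, and $R[H]\subsetneq R[G]$ since for any $g\in G\setminus H$ the basis element $1_R\cdot g$ lies outside $R[H]$. The real work is showing $R[H]\preceq_1 R[G]$.

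For this, recall from the proof of Lemma \ref{SR group} that for every finite set $S\subseteq G\setminus\{1\}$ there is a group retraction $\kappa\colon G\to H$ with $\kappa|_H=\mathrm{id}_H$ and $1\notin\kappa(S)$. Any such $\kappa$ extends $R$-linearly to a ring retraction $\tilde\kappa\colon R[G]\to R[H]$ that fixes $R[H]$ pointwise. A nonzero element $\sigma=\sum_i r_i g_i\in R[G]$ (with distinct $g_i\in G$ and $r_i\in R\setminus\{0\}$) satisfies $\tilde\kappa(\sigma)=\sum_i r_i\kappa(g_i)\neq 0$ precisely when $\kappa$ is injective on $\operatorname{supp}(\sigma)$, i.e.\ when $\kappa(g_ig_j^{-1})\neq 1$ for all $i\neq j$. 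Hence, given any finite collection $\Sigma$ of nonzero elements of $R[G]$, setting
\[
S\;=\;\bigcup_{\sigma\in\Sigma}\{\,gh^{-1}:g\neq h\in\operatorname{supp}(\sigma)\,\}\;\subseteq\;G\setminus\{1\}
\]
and invoking the group-level retraction yields a ring retraction $\tilde\kappa\colon R[G]\to R[H]$ that vanishes on no element of $\Sigma$.

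To conclude, suppose $\bar h\in R[H]$, $\bar g\in R[G]$, and $R[G]\models\psi(\bar h,\bar g)$ for a quantifier-free $\psi$. Reduce $\psi$ to disjunctive normal form and fix a satisfied conjunct, consisting of finitely many polynomial equations and inequations $t'_\ell(\bar h,\bar g)\neq s'_\ell(\bar h,\bar g)$. Applying the previous paragraph to $\Sigma=\{\,t'_\ell(\bar h,\bar g)-s'_\ell(\bar h,\bar g)\,\}$ yields a $\tilde\kappa$ that, being a ring homomorphism, preserves the equations; by construction preserves the inequations; and fixes $\bar h$. Then $\bar h':=\tilde\kappa(\bar g)\in R[H]$ witnesses $R[H]\models\psi(\bar h,\bar h')$, so $R[H]\preceq_1 R[G]$, as required. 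The only step that goes beyond the group-theoretic argument of Section \ref{self-reflective group} is the passage from ``$\kappa$ avoids one finite set of nontrivial elements'' to ``$\tilde\kappa$ is nonzero on one finite set of nonzero elements of the group ring,'' which I expect to be the main point to check and which is handled cleanly by the substitution $\{g_i\}\rightsquigarrow\{g_ig_j^{-1}\}$.
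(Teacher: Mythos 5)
Your proposal is correct and follows essentially the same route as the paper: take $R[H]$ as the proper substructure, extend the group-level retractions $\kappa$ to ring retractions $\tilde\kappa$, and apply them to the differences $g h^{-1}$ of support elements so that $\tilde\kappa$ is injective on each finite support and hence kills no nonzero element of the given finite set. One small over-claim: $\tilde\kappa(\sigma)\neq 0$ is \emph{implied by} (not equivalent to) $\kappa$ being injective on $\operatorname{supp}(\sigma)$ --- coefficient cancellations could fail even when supports collide --- but you only use the correct direction, so the argument is unaffected.
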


\begin{proof}
Note that any endomorphism $\alpha$ of $G$ induces an endomorphism $\alpha^*$ of $R[G]$ by pre-composition and fixing $R$. Furthermore, if the endomorphism on $G$ is injective, then the induced endomorphism on $R[G]$ is also injective.

Let $\iota$ be as defined in Lemma \ref{SR group}. Then $\iota^*$, the induced endomorphism of $R[G]$, is also injective and not surjective. Call $B = \iota^*(R[G])$. Note that $B$ is just $R[H]$, where $H = \iota(G)$.

Now, as in Lemma \ref{SR group}, it suffices to show that for every finite subset $0 \notin T \subset R[G]$, there is a retraction $\beta :R[G] \to R[G]$ with $\beta(R[G]) = B$, $\beta\vert_{B} = \operatorname{id}\vert_{B}$, and $0 \notin \beta(T)$. Let $U$ be all the group elements that appear in some members of $T$, and $S = \{ u_1u_2^{-1} \mid u_1 \neq u_2\in U\}$. Since $1 \notin S$, the proof of Lemma \ref{SR group} gives a retraction $\kappa: G \to G$ such that $1 \notin \kappa(S)$. Now the induced endomorphism $\beta = \kappa^*$ is also a retraction. Furthermore, if $\kappa^*(t) = 0$ for some $t\in T$, since $1\notin \kappa(S)$, $\kappa^*$ is injective on the support of $t$, thus we must have $t = 0$, a contradiction. Thus $R[G]$ is also self-reflective.
\end{proof}

\bibliography{SSoG}
\bibliographystyle{alpha}

\end{document}